\newtheorem{theorem}{Theorem}[]
\newtheorem{lemma}[theorem]{Lemma}
\newtheorem{corollary}[theorem]{Corollary}
\newtheorem{observation}[theorem]{Observation}
\newtheorem{conjecture}[theorem]{Conjecture}
\newtheorem{claim}[theorem]{Claim}
\newtheorem{definition}[theorem]{Definition}
\newcommand\cref[1]{Corollary~\ref{cor:#1}}
\newcommand\HH{\mathcal{H}}
\newcommand\FF{\mathcal{F}}
\newcommand\cS{\mathcal{S}}
\begin{document}

\title{Coloring directed hypergraphs}

\author{Bal\'azs Keszegh\thanks{Alfréd Rényi Institute of Mathematics and Eötvös Loránd University, MTA-ELTE Lendület Combinatorial Geometry Research Group, Budapest, Hungary. Research supported by the Lend\"ulet program of the Hungarian Academy of Sciences (MTA), under the grant LP2017-19/2017, by the J\'anos Bolyai Research Scholarship of the Hungarian Academy of Sciences, by the National Research, Development and Innovation Office -- NKFIH under the grant K 132696 and FK 132060 and by the ÚNKP-20-5 New National Excellence Program of the Ministry for Innovation and Technology from the source of the National Research, Development and Innovation Fund.}}

\maketitle
\begin{abstract}
	Inspired by earlier results about proper and polychromatic coloring of hypergraphs, we investigate such colorings of directed hypergraphs, that is, hypergraphs in which the vertices of each hyperedge is partitioned into two parts, a tail and a head. 
	We present a conjecture of D. Pálvölgyi and the author, which states that directed hypergraphs with a certain restriction on their pairwise intersections can be colored with two colors. Besides other contributions, our main result is a proof of this conjecture for $3$-uniform directed hypergraphs. This result can be phrased equivalently such that if a $3$-uniform directed hypergraph avoids a certain directed hypergraph with two hyperedges, then it admits a proper $2$-coloring. Previously, only extremal problems regarding the maximum number of edges of directed hypergraphs that avoid a certain hyperedge were studied.
\end{abstract}

\section{Introduction}

Graphs are perhaps the most important objects of combinatorics and coloring problems are among the most interesting and well-studied problems concerning graphs. Hypergraphs are natural generalizations that have many applications and are also widely studied\footnote{A hypergraph $\HH$ is defined by a pair $(V,E)$ where $V$ is a finite set of vertices and $E$, the family of its hyperedges, is a family of its subsets.}. Coloring problems about hypergraphs also receive a lot of interest, among which the case of proper $2$-colorings\footnote{A $c$-coloring of the vertices of a hypergraph is proper if every hyperedge contains vertices with at least two different colors.} has perhaps the longest history. Proper $2$-colorability is also called Property $B$. In the last few years polychromatic $c$-colorings\footnote{A $c$-coloring of the vertices of a hypergraph is $c$-polychromatic if every hyperedge contains vertices with all $c$ colors.} of hypergraphs gained attention, especially for hypergraphs that can be defined by geometric objects in natural ways (e.g., by containment relations). 
For more about coloring hypergraphs we refer the reader to \cite{berge,tuza}, for more about coloring geometric hypergraphs we refer the reader to the up-to-date database \cite{cogezoo} including the references therein.

While it is $NP$-complete to decide if a graph has chromatic number $c\ge 3$, the $c=2$ case, i.e., recognizing bipartite graphs, can be done easily in polynomial time. On the other hand, already for $3$-uniform hypergraphs it is $NP$-complete to decide if they admit a proper $2$-coloring. Thus, finding natural conditions that imply proper $2$-colorability is an interesting endeavour, already for $3$-uniform hypergraphs, see, e.g., \cite{lovaszcikk} for some early results in this direction.
Using as a starting point a well-known property that guarantees proper $2$-colorability, the aim of this paper is to give natural new conditions on the intersections of hyperedges that guarantee proper $2$-colorability or polychromatic $c$-colorability. We define a new condition, which we call Property S. We conjecture that Property S implies Property B, i.e., the proper $2$-colorability of the hypergraph. Our main result is a proof of this conjecture for $3$-uniform hypergraphs. We also show that it holds for linear hypergraphs.

Along the way we also prove some results which give upper bounds on the number of hyperedges assuming Property S or other similar properties.

\subsection{Restrictions on the intersection sizes}

A classic exercise from the book of Lov\'asz (\cite{lovasz07}, Problem 13.33), is the following:

\begin{theorem}\label{thm:lovasz}
	Given a hypergraph $\HH$ such that every pair of hyperedges has an empty intersection or intersects in at least two vertices, then $\HH$ admits a proper $2$-coloring.
\end{theorem}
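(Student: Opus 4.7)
The plan is to prove \tref{lovasz} by a local-modification argument: start from any $2$-coloring of $V(\HH)$ that minimizes the number of monochromatic hyperedges, and show that this minimum must be zero.

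Suppose for contradiction that some hyperedge $E$ is monochromatic in the optimal coloring; by symmetry say every vertex of $E$ is red. Pick an arbitrary $v\in E$ and recolor $v$ blue. Clearly $E$ itself is no longer monochromatic after the swap, so to reach a contradiction it suffices to verify that the flip does not turn any previously bichromatic hyperedge into a monochromatic one. Hyperedges avoiding $v$ are unaffected. So the only danger is a hyperedge $E'\ne E$ with $v\in E'$ which, before the flip, contained $v$ (red) together with only blue vertices, and now becomes all blue.

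This is where the intersection hypothesis enters. Since $v\in E\cap E'$, the intersection $E\cap E'$ is nonempty, so by assumption $|E\cap E'|\ge 2$. Pick any $u\in (E\cap E')\setminus\{v\}$. Because $u\in E$ and $E$ is all red, $u$ is red; because $u\ne v$, its color is unchanged by the flip. Hence $E'$ still contains the red vertex $u$ together with the now-blue vertex $v$, so $E'$ is bichromatic after the flip, contradicting the scenario above.

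Thus the flip strictly decreases the number of monochromatic hyperedges, contradicting the minimality of the starting coloring; therefore the minimizing coloring is already proper. I do not foresee a serious obstacle here: the only place the hypothesis is used is to extract the second common vertex $u\ne v$, and once that is available the rest is automatic. The argument is essentially a one-step ``flip an arbitrary vertex of a monochromatic edge'' move, which should presumably be presented either as an extremal argument as above or, equivalently, as a greedy uncoloring/recoloring procedure that must terminate because the number of bichromatic hyperedges is bounded and strictly increases at each step.
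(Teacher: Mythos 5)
Your proof is correct and is essentially the paper's own approach: the paper proves this via the same recoloring idea (take a coloring, flip a vertex of a monochromatic edge, and use the guaranteed second common vertex to rule out creating a new monochromatic edge), presented there as the $c=2$ specialization of the proof of Theorem~\ref{thm:polyassim} and again in the proof of Claim~\ref{claim:specboth}. The only cosmetic difference is that you phrase it as minimizing the number of monochromatic hyperedges while the paper phrases it as an iterative improvement of a potential function $f$, which is the same argument.
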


This can be generalized to give a condition that guarantees a polychromatic $c$-coloring:

\begin{theorem}\label{thm:poly}
	Given a positive integer $c\ge 2$ and a hypergraph $\HH$ such that every set of at most $c$ hyperedges has an empty intersection or intersects in at least $c$ vertices, then $\HH$ admits a polychromatic $c$-coloring.
\end{theorem}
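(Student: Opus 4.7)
The plan is to adapt the swap argument that proves the $c=2$ case (Theorem~\ref{thm:lovasz}) to arbitrary $c$. I would take a $c$-coloring $\chi\colon V(\HH)\to[c]$ minimizing the total deficiency
$$\Phi(\chi)=\sum_{e\in\HH}\bigl(c-|\chi(e)|\bigr),$$
where $\chi(e)\subseteq[c]$ is the set of colors appearing on the vertices of $e$. If $\Phi(\chi)=0$ we are done; otherwise some $e_0\in\HH$ is missing a color $i$. Applying the hypothesis to the singleton family $\{e_0\}$ gives $|e_0|\ge c$, and since the vertices of $e_0$ use at most $c-1$ colors, by pigeonhole some color $j$ appears at least twice on $e_0$.

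Given any such $j$-coloured vertex $v\in e_0$, the swap that recolours $v$ from $j$ to $i$ changes $\Phi$ by $-1$ on $e_0$ (which gains $i$ and retains $j$) and, for every other hyperedge $e'$ through $v$, by $-[i\notin\chi(e')]+[v\text{ is the unique }j\text{-vertex of }e']$. Minimality of $\chi$ forces the total change to be non-negative, hence there must exist a hyperedge $e_v\ne e_0$ in which $v$ is the unique $j$-vertex. The two-edge case of the intersection hypothesis then yields $|e_0\cap e_v|\ge c$, and combined with the uniqueness of $v$ this shows that $e_v$ contains at least $c-1$ vertices of $e_0$ of colors different from $j$.

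The main step is to pick $c-1$ vertices $v_1,\dots,v_{c-1}\in e_0$ drawn from color classes of multiplicity at least two in $e_0$ and apply the $c$-wise hypothesis to $\{e_0,e_{v_1},\dots,e_{v_{c-1}}\}$. Because each $e_{v_r}$ meets $\chi^{-1}(\chi(v_r))\cap e_0$ only at $v_r$, the common intersection of these $c$ hyperedges is confined to a subset of $e_0$ that avoids every color class containing at least one $v_s$; for a judicious choice of the $v_s$ this subset has fewer than $c$ vertices, so the hypothesis forces the intersection to be empty. This emptiness clashes with the pairwise bounds $|e_0\cap e_{v_s}|\ge c$ via a double count across the ``large'' color classes of $e_0$, as is already transparent for $c=3$: picking $v$ and $u$ in two distinct color classes of $e_0$ of multiplicity $\ge 2$, one gets $e_0\cap e_v\cap e_u\subseteq\{v,u\}$ and hence empty, so $v\notin e_u$ and $u\notin e_v$, which over all such $u$ contradicts the lower bound $|e_v\cap(e_0\setminus\chi^{-1}(j))|\ge c-1=2$. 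The main obstacle I anticipate is the bookkeeping when $e_0$ misses several colors simultaneously or has many singleton color classes, which forces the choice of $v_1,\dots,v_{c-1}$ to combine vertices from several large classes and makes the double count more intricate.
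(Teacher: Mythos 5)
Your opening moves are sound and in fact coincide with the paper's: the paper proves Theorem~\ref{thm:poly} via Theorem~\ref{thm:polyassim}, whose potential function (the number of pairs $(H,j)$ with $H\in\HH_j$ missing color $j$) specializes, when all $\HH_j=\HH$, to exactly your $\Phi$, and the extraction of a blocking edge $e_v$ in which $v$ is the unique vertex of its color is the same local-move analysis. The genuine gap is the final step for general $c$. You select $v_1,\dots,v_{c-1}$ inside $e_0$ only, force the $c$-wise intersection to be empty, and then hope to contradict the pairwise bounds by a double count --- but that double count is precisely the part you do not supply, and it is not routine: emptiness of $e_0\cap e_{v_1}\cap\dots\cap e_{v_{c-1}}$ only says that each candidate vertex is missed by \emph{some} $e_{v_s}$, which must then be aggregated over many choices of tuples, with bookkeeping that depends on the multiset of color multiplicities of $e_0$ (your $c=3$ computation also silently assumes $e_0$ has two color classes of multiplicity at least $2$; that subcase happens to be unnecessary since a single large class already contradicts the pairwise bound, but it illustrates that the case analysis is real). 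As written, the proof is complete only for $c=3$.

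The idea you are missing --- and the one the paper uses --- is to build the blocking edges as a \emph{chain} rather than in parallel, so that the hypothesis is always invoked on a family with \emph{nonempty} intersection. Having found $e_{v_2},\dots,e_{v_t}$ with each $v_s$ the unique vertex of its color $c_s$ in $e_{v_s}$, choose the next vertex $v_{t+1}$ inside $e_0\cap e_{v_2}\cap\dots\cap e_{v_t}$, not merely inside $e_0$. This intersection contains $v_t$, hence is nonempty, hence by hypothesis has at least $c\ge t$ vertices and therefore contains some $w\notin\{v_2,\dots,v_t\}$; since $w\in e_0$ it misses color $1$, and since $w\in e_{v_s}\setminus\{v_s\}$ it misses each $c_s$, so $w$ carries a brand-new color and the process can continue with $v_{t+1}=w$. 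After $c-1$ steps one would need a $(c+1)$-st color, a contradiction, so some single-vertex recoloring strictly decreases $\Phi$. Note that in this version there is no need to restrict attention to color classes of multiplicity at least two: recoloring any $v\in e_0$ to the missing color repairs the deficient pair $(e_0,1)$, and minimality yields a blocking edge (possibly $e_0$ itself). I recommend replacing your parallel selection and double count by this nested construction; it uses only the ``nonempty implies at least $c$ vertices'' branch of the hypothesis and closes the argument uniformly in $c$.
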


Note that the assumption of Theorem \ref{thm:poly} for $c=1$ implies that the hyperedges of $\HH$ all have size at least $c$.  The following more general theorem implies immediately Theorem \ref{thm:poly} by setting $\HH_i=\HH$ for all $i$:

\begin{theorem}\label{thm:polyassim}\cite{erdosp2}
	Given hypergraphs $\HH_1,\dots, \HH_c$ on the same vertex set such that for every $i$ ($1\le i\le c$) every set of $i$ hyperedges from different hypergraphs has an empty intersection or intersects in at least $i$ vertices, then the vertices admit a $c$-coloring such that for every $j$, every $H\in H_j$ contains a vertex with color $j$.
\end{theorem}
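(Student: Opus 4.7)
The plan is to induct on $c$. The base case $c=1$ is immediate: color every vertex with color $1$, and every nonempty edge of $\HH_1$ is satisfied automatically.

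For $c\ge 2$, assume the theorem for $c-1$. I will choose a set $V_c\subseteq V$, assign color $c$ to it, and apply induction to $\HH_1,\dots,\HH_{c-1}$ restricted to $V\setminus V_c$. Two things are needed: (i) $V_c$ must meet every hyperedge of $\HH_c$, so that every edge of $\HH_c$ receives color $c$; and (ii) the restricted hypergraphs must inherit the hypothesis with parameter $c-1$, so that induction applies.

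The crux is to take $V_c$ to be a \emph{minimal} transversal of $\HH_c$: then for every $v\in V_c$ there exists $H_c^v\in\HH_c$ with $H_c^v\cap V_c=\{v\}$. Fix $i\le c-1$ together with hyperedges $H_{k_1}\in\HH_{k_1},\dots,H_{k_i}\in\HH_{k_i}$ from distinct indices in $\{1,\dots,c-1\}$, and set $T:=H_{k_1}\cap\cdots\cap H_{k_i}$. To verify (ii) I must show that $T\setminus V_c$ is either empty or has size at least $i$. If $T\cap V_c=\emptyset$, the original hypothesis for $i$ gives the bound immediately. Otherwise, pick $v\in T\cap V_c$ and invoke the hypothesis for $i+1\le c$ applied to $H_{k_1},\dots,H_{k_i},H_c^v$, which lie in distinct hypergraphs: their intersection $T\cap H_c^v$ contains $v$, hence is nonempty, and so has at least $i+1$ elements. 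Since $H_c^v\cap V_c=\{v\}$, each of the remaining at least $i$ elements of $T\cap H_c^v$ lies outside $V_c$, furnishing the needed $i$ elements of $T\setminus V_c$.

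The main obstacle, as I see it, is precisely the one the minimality step is designed to overcome: inherited intersection conditions are delicate under vertex deletion, and without the ``minimality witnesses'' $H_c^v$ — which allow us to upgrade the $i$-fold bound to an $(i+1)$-fold bound by adding $H_c^v$ to the tuple — the restricted instance would not in general satisfy the inductive hypothesis. Once (ii) is established, the $i=1$ case of the same argument guarantees that no edge of $\HH_j$ ($j<c$) is wiped out by the restriction, so the $(c-1)$-coloring of $V\setminus V_c$ obtained from induction, together with color $c$ on $V_c$, yields the required coloring of $V$.
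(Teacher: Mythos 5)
Your proof is correct, but it takes a genuinely different route from the one in the paper. The paper gives a recoloring argument: it starts from an arbitrary $c$-coloring, and whenever some $H_1\in\HH_1$ misses color $1$ it tries to recolor a vertex of $H_1$; each failed attempt is blocked by a hyperedge $H_j\in\HH_j$ whose only vertex of color $j$ is the one being recolored, and the intersection hypothesis forces the chain $H_1\cap\dots\cap H_i$ to keep supplying vertices of fresh colors until one would need a color outside $[c]$ — a contradiction, so some recoloring succeeds and a potential function decreases. Your proof instead inducts on $c$, peeling off a minimal transversal $V_c$ of $\HH_c$ as the color class $c$ and showing that the minimality witnesses $H_c^v$ let you upgrade the $i$-fold intersection bound to an $(i+1)$-fold one, so the restriction to $V\setminus V_c$ inherits the hypothesis with parameter $c-1$; this is exactly the $c$-color generalization of the classical minimal-transversal proof of Theorem~\ref{thm:lovasz}, i.e., essentially the route the paper attributes to Erd\H{o}s and deliberately departs from. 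Your inductive argument is arguably cleaner and more self-contained (every step is a static set-theoretic verification, and the case analysis $T\cap V_c=\emptyset$ versus picking $v\in T\cap V_c$ is airtight, including the $i=1$ check that no $H\in\HH_j$ is swallowed by $V_c$); the paper's recoloring proof buys an ``improvement'' procedure that starts from any coloring and is in the same spirit as Berge's recoloring technique used elsewhere in the paper. The only implicit assumption in your write-up — that all hyperedges are nonempty, so that a minimal transversal of $\HH_c$ exists and the base case works — is equally implicit in the theorem statement and in the paper's own proof, so it is not a gap.
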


P. Erd\H{o}s has considered this problem first for $c=2$ then later proved the general case by induction on the number of colors \cite{erdosp1,erdosp2}. His proof can be considered to be the generalization of the proof of Theorem \ref{thm:lovasz} as presented in \cite{lovasz07}. Here we present a new proof based on a recoloring idea.

\subsection{Restrictions on the 1-intersection graph}\label{sec:gyarfas}
A different possible generalization of Theorem \ref{thm:lovasz} is the following. We define the 1-intersection graph of $\HH$ as a graph that has vertices corresponding to the hyperedges of $\HH$ and two vertices are connected by an edge if and only if the corresponding hyperedges intersect in exactly one vertex. Theorem \ref{thm:lovasz} states that if the 1-intersection graph is empty then the hypergraph is $2$-colorable. Gyárfás et al. \cite{gyarfas} generalized this by showing that if $\HH$ is $3$-uniform then it is $2$-colorable even when the 1-intersection graph is required only to be bipartite. More generally they showed that the chromatic number of a $3$-uniform hypergraph is at most the chromatic number of its 1-intersection graph (assuming that the latter is at least two). It is an open problem if this holds for $k$-uniform hypergraphs with $k>3$. It may even be true for any hypergraph in which all edges have size at least $3$. Yet, already the case if a $4$-uniform hypergraph with bipartite 1-intersection graph is $2$-colorable is open.

\subsection{Directed hypergraphs}
While the previous generalization assumed some global properties of the 1-intersection graph, our aim is to strengthen Theorem \ref{thm:lovasz} by having only local constraints. Our constraints need that some vertices of the hyperedges are deemed special and then the intersection of two hyperedges is restricted with respect to these special vertices. One such notion of hyperedges with two kinds of vertices (special and non-special) is the notion of \emph{set-pairs} which found many applications and generalizations in extremal combinatorics starting with the famous result of Bollob\'as about intersecting set-pair systems \cite{Bollobas1965}. 

A more recent notion, generalizing the notion of directed graphs, is that of \emph{directed hypergraphs} \cite{gallo}: a directed hypergraph is a hypergraph in which every hyperedge of a hypergraph is split into two parts, a tail and a head.\footnote{This notion should not be confused with other notions of directed hypergraphs, where there is a total order on the vertex set of each hyperedge, e.g., in \cite{simonovits}.} A vertex in the tail (resp. head) of a hyperedge is called a tail-vertex (resp. head-vertex) of the hyperedge. We allow a hyperedge to have an empty tail or head. Similar to intersecting set-pairs, mostly extremal problems were considered in this setting as well. Usually we are interested in the maximum number of hyperedges in a directed hypergraph that avoids some fixed directed hypergraph as a subhypergraph.

Directed hypergraphs with only hyperedges of size $3$ having a tail of size two and head of size one are called \emph{$2\rightarrow 1$ hypergraphs} and received the most attention (see, e.g., \cite{turangy,21cameron}). A $2\rightarrow 1$ hypergraph is said to be \emph{oriented} if it does not contain two hyperedges on the same $3$ vertices. 
Let $ex(n,\FF)$ denote the maximal number of hyperedges in a $2\rightarrow 1$ hypergraph on $n$ vertices avoiding $\FF$. Let $ex_o(n,\FF)$ denote the maximal number of hyperedges in an {\it oriented} $2\rightarrow 1$ hypergraph on $n$ vertices avoiding $\FF$. 

While we also present some extremal results, our focus is on showing that if a directed hypergraph avoids certain directed subhypergraphs with two hyperedges then it is proper $2$-colorable.


\begin{claim}\label{claim:specboth}
	Given a directed hypergraph $\HH$ such that every hyperedge has at least one tail-vertex and for every pair of hyperedges $H_1,H_2\in \HH$, if $H_1\cap H_2=\{v\}$ then $v$ must be a head-vertex of both hyperedges. Then $\HH$ admits a proper $2$-coloring.
\end{claim}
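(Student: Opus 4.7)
My plan is to prove the claim by induction on $|\HH|$, using a local recoloring step. Remove any hyperedge $H_0\in\HH$; the remaining hypergraph $\HH\setminus\{H_0\}$ still satisfies the hypothesis of the claim, so by induction it admits a proper $2$-coloring $c'$. Extend $c'$ arbitrarily to any vertices appearing only in $H_0$. If $H_0$ happens to be bichromatic under this extension, we are done, so assume $H_0$ is monochromatic, say entirely red.

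Next I would choose any tail-vertex $v$ of $H_0$ (which exists by assumption) and flip its color from red to blue. Because $|H_0|\ge 2$ (an implicit assumption, since a proper $2$-coloring of a singleton hyperedge is impossible), this immediately makes $H_0$ bichromatic. The real task is then to check that no hyperedge $H_1\in \HH\setminus\{H_0\}$ that was bichromatic under $c'$ becomes monochromatic after the flip.

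This verification is the only real content of the argument and the main obstacle. Suppose some such $H_1$ did turn monochromatic. Since only $v$'s color changed, $H_1$ must have had $v$ as its unique red vertex and all its other vertices blue; in particular $H_1\setminus\{v\}$ is entirely blue. But every vertex of $H_0\setminus\{v\}$ is red, so $H_0$ and $H_1$ cannot share any vertex outside $v$, giving $H_0\cap H_1=\{v\}$. By the hypothesis of the claim, $v$ must then be a head-vertex of $H_0$, contradicting our choice of $v$ as a tail-vertex. Thus no such $H_1$ exists, and the induction goes through. The hypothesis is essentially tailor-made for this kind of ``flip along a tail'' recoloring: shared tail-vertices can never be the sole witness to an intersection, which is exactly what makes the flip safe.
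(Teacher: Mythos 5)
Your proof is correct and uses essentially the same key idea as the paper: recolor a tail-vertex of a monochromatic hyperedge and observe that any newly monochromatic hyperedge would have to intersect it in exactly that tail-vertex, contradicting the hypothesis. The paper wraps this step in a decreasing-potential argument (the number of monochromatic hyperedges drops at each recoloring, starting from an arbitrary coloring) rather than induction on $|\HH|$, but this is an immaterial difference.
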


It seems that if we require that when two hyperedges intersect in one vertex $v$, then $v$ must be a head-vertex of only at least one of these two hyperedges instead of both, then the following might be true, conjectured by D. P\'alv\"olgyi and the author:

\begin{conjecture}\label{conj:majo}
	Given a directed hypergraph $\HH$ such that in every hyperedge the number of head-vertices is less than the number of tail-vertices and for every pair of hyperedges $H_1,H_2\in \HH$, if $H_1\cap H_2=\{v\}$ then $v$ is a head-vertex in at least one of the hyperedges. Then $\HH$ admits a proper $2$-coloring.
\end{conjecture}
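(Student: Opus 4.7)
The plan is to approach \cjref{majo} via a local-search (minimum counter-example) argument on 2-colorings. Fix a 2-coloring $c\colon V(\HH)\to\{0,1\}$ that minimizes $\Phi(c)$, the number of monochromatic hyperedges, and suppose for contradiction that $\Phi(c)>0$, so that some $H_0\in\HH$ is monochromatic, say in color $0$. For each $v\in H_0$, flipping $v$'s color destroys every monochromatic $0$-edge containing $v$ (in particular $H_0$) and creates exactly those edges $H'$ for which $v$ was the unique $0$-vertex of $H'$, i.e., for which $H'\setminus\{v\}$ was already entirely colored $1$. By minimality of $\Phi$, at every $v\in H_0$ the number of created edges must be at least the number of destroyed edges.

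The hypothesis of \cjref{majo} enters via the following observation. If $v$ is a tail-vertex of $H_0$ and $H'$ is newly monochromatic after flipping $v$, then necessarily $H_0\cap H'=\{v\}$: any further common vertex would lie in $H_0$ (hence be colored $0$), contradicting that $H'\setminus\{v\}$ is all $1$. The intersection hypothesis then forces $v$ to be a head-vertex of $H'$: since $v$ is a tail-vertex of $H_0$ it is not a head-vertex of $H_0$, so the ``head in at least one'' obligation falls on $H'$. Consequently every ``bad neighbor'' produced by a tail-flip at $v$ is a hyperedge $H'$ sharing only $v$ with $H_0$, with $v$ a head-vertex of $H'$, and with $H'\setminus\{v\}$ entirely colored $1$. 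From here I would double-count over the tail-vertices of $H_0$, exploiting the strict inequality that $H_0$ has more tail-vertices than head-vertices: at each tail-vertex the destroyed count is at least one (from $H_0$ itself), and summing over all tails forces a structured packing of ``almost mono-$1$'' edges attached to $H_0$ through their head-vertices. Since a single round of counting is unlikely to suffice, the strategy is to iterate: each newly-forced $H'$ is itself a monochromatic color-$1$ edge to which the same flip analysis applies at its tails, producing a tree or chain of attached edges whose every layer uses the strict tail-majority inequality and the rigid intersection structure, until a vertex reuse closes a cycle admitting a profitable simultaneous multi-flip, or a counting inequality fails at some bounded depth.

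The main obstacle is controlling this iteration. In the 3-uniform case proved in the paper, the small edge size makes the bad-neighbor structure rigid enough for the iteration to collapse quickly; for general uniformity two genuine difficulties arise. First, although each bad neighbor $H'$ is attached to exactly one tail-vertex of $H_0$, at deeper layers the same hyperedge may be forced for incompatible reasons, and the accounting of ``almost monochromatic'' edges across successive layers is hard to keep linear. Second, at deeper layers the pairwise intersections between the successively produced monochromatic edges are not forced to be singletons, so the clean head-vertex deduction used above is no longer available, and one must work with arbitrary intersection sizes. Overcoming both likely requires a cleverly weighted potential in place of $\Phi$ --- for instance, weighting each monochromatic edge by the excess of its number of tail-vertices over its number of head-vertices --- or a global flow / fractional-matching certificate in place of the vertex-by-vertex local move. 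Designing such a potential that is guaranteed to strictly decrease under a well-chosen local move is, in my view, the genuine difficulty of the conjecture, and the reason only the 3-uniform case currently yields to this approach.
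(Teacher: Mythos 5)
This statement is a \emph{conjecture} in the paper: no proof of it in full generality is known, and the paper itself only establishes the linear case (\tref{linear}) and the $3$-uniform case (\tref{3unif}). Your proposal, by your own account, is not a proof either --- it is a program. The sound part is the opening observation: in a $\Phi$-minimizing coloring, if flipping a tail-vertex $v$ of a monochromatic $H_0$ creates a new monochromatic $H'$, then indeed $H_0\cap H'=\{v\}$ and Property S forces $v$ to be a head-vertex of $H'$. This is exactly the mechanism behind the paper's Claim~\ref{claim:specboth}, where the stronger hypothesis (head-vertex of \emph{both}) kills $H'$ outright. Under Property S alone, the argument stalls precisely where you say it does: the ``iterate and close a cycle or fail a counting inequality'' step is not carried out, no potential function with a guaranteed strict decrease is exhibited, and the claim that the iteration terminates profitably is asserted rather than proved. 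That missing step is the entire content of the conjecture, so the proposal has a genuine gap and does not establish the statement.

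It is also worth noting that the paper's proofs of the two known special cases do not follow your route. The linear case is proved by a degree-counting argument on the oriented vertex--hyperedge incidence graph, producing a degree-one vertex and inducting; the $3$-uniform case is proved by encoding each hyperedge $ab\rightarrow s$ as a labeled edge $ab$ with label $s$, classifying the local structure of this labeled graph (cores, residual cycle and tree components, rebel and minion vertices), and then writing down an explicit four-phase coloring --- there is no local-search or recoloring step at all in that proof. So even in the one nontrivial uniformity where the conjecture is settled, the successful argument is a direct structural construction rather than a potential-function descent; this is some evidence that the obstruction you identify (controlling the cascade of newly created monochromatic edges) is real and not merely a matter of choosing cleverer weights.
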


Observe that Conjecture \ref{conj:majo} is a generalization of Theorem \ref{thm:lovasz}, as we aimed for. Let us call Property S the assumption of this conjecture, that is:

\begin{definition}
	Given a directed hypergraph $\HH$. If in every hyperedge of $\HH$ the number of head-vertices is less than the number of tail-vertices and $\HH$ does not contain two hyperedges intersecting in a single vertex which is a tail-vertex in both of them, then we say that $\HH$ has \emph{Property S}.
\end{definition}

Thus with this notation the conjecture claims that Property S implies Property B. In Section \ref{sec:shortproofs} we give an example showing that in the conjecture the assumption on the number of head-vertices in a hyperedge cannot be increased, i.e. we cannot change `less than' to `at most'. While Conjecture \ref{conj:majo} is open in general, we prove that it holds in special cases. Before showing these cases, first we prove that this property does decrease the maximum number of hyperedges:

\begin{claim}\label{claim:size}
	Given a $k$-uniform directed hypergraph $\HH$ with Property $S$, i.e. every hyperedge of $\HH$ has less head-vertices than tail-vertices and for every pair of hyperedges $H_1,H_2\in \HH$, if $H_1\cap H_2=\{v\}$ then $v$ is a head-vertex in at least one of the hyperedges. Then $\HH$ has $O(n^{k-1})$ hyperedges.
\end{claim}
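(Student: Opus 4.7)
The plan is a direct double-counting argument: for each vertex $v$ I will bound the number of hyperedges in which $v$ is a tail-vertex by $O_k(n^{k-2})$, and then sum over $v$, using that every hyperedge has at least one tail-vertex.

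Fix a vertex $v\in V(\HH)$, and let $E_v\subseteq \HH$ denote the set of hyperedges having $v$ as a tail-vertex. The key step is the observation that the ``tail-link'' $\mathcal{F}_v := \{H\setminus\{v\} : H\in E_v\}$, viewed as a family of $(k-1)$-subsets of $V\setminus\{v\}$, is pairwise intersecting. Indeed, if $H_1,H_2\in E_v$ satisfied $(H_1\setminus\{v\})\cap(H_2\setminus\{v\})=\emptyset$, then $H_1\cap H_2=\{v\}$, and Property~S would force $v$ to be a head-vertex in at least one of $H_1,H_2$, contradicting $H_1,H_2\in E_v$.

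Next I would invoke the easy upper bound for intersecting uniform families: fix any $A\in\mathcal{F}_v$; each other member of $\mathcal{F}_v$ must share at least one of the $k-1$ vertices of $A$, giving $|\mathcal{F}_v|\le (k-1)\binom{n-2}{k-2}=O_k(n^{k-2})$. Since directed hyperedges in $E_v$ with the same underlying $k$-set $U\cup\{v\}$ differ only in their head/tail split, and there are at most $2^k$ such splits, we get $|E_v|=O_k(n^{k-2})$. Summing, $|\HH|\le\sum_{v\in V}|E_v| = n\cdot O_k(n^{k-2}) = O_k(n^{k-1})$, as required.

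I do not anticipate a serious obstacle: the argument is essentially a single application of Property~S combined with a standard intersecting-family estimate. The only mildly subtle point is absorbing the (boundedly many) possible head/tail refinements of a common underlying $k$-set into the $O_k$-constant, which is immediate.
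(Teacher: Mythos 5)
Your proof is correct and follows essentially the same route as the paper's: both double-count tail-vertex incidences and use Property S to show that two hyperedges sharing a common tail-vertex must intersect in a second vertex, yielding an $O(n^{k-2})$ bound per vertex. Your explicit handling of the $2^k$ multiplicity of head/tail splits over a common vertex set is a minor bookkeeping point the paper absorbs silently into the $O(\cdot)$.
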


Note that Claim \ref{claim:size} is optimal as $\HH$ can be the directed hypergraph containing all hyperedges that contain some fixed vertex $v$ which is set to be the head-vertex of every hyperedge.
	
We can show that Conjecture \ref{conj:majo} holds for linear hypergraphs:\footnote{A hypergraph is \emph{linear} if every pair of hyperedges intersects in at most one vertex. This is a widely studied class of hypergraphs, e.g., the well-known Erd\H{o}s-Faber-Lov\'asz conjecture which was (essentially) solved only recently \cite{efl-conj} states that the vertices of any $k$-uniform linear hypergraph with $k$ hyperedges admits a polychromatic $k$-coloring. A directed hypergraph is said to be linear if the underlying non-directed hypergraph is linear.}

\begin{theorem}\label{thm:linear}
	Given a directed hypergraph $\HH$ with Property S, i.e., every hyperedge of $\HH$ has less head-vertices than tail-vertices and for every pair of hyperedges $H_1,H_2\in \HH$, if $H_1\cap H_2=\{v\}$ then $v$ is a head-vertex in at least one of the hyperedges. If $\HH$ is linear, then $\HH$ admits a proper $2$-coloring.
\end{theorem}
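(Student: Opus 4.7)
The plan is to reduce the linear case to an almost-trivial coloring problem by restricting attention to the tail-parts of the hyperedges. For each $H\in\HH$, let $H^T\subseteq H$ denote the set of tail-vertices of $H$, and set $\HH^T=\{H^T : H\in\HH\}$. Since Property S requires strictly more tail-vertices than head-vertices, for every hyperedge of size at least $2$ one automatically has $|H^T|\ge 2$ (hyperedges of size $\le 1$ are not properly $2$-colorable in any hypergraph and are implicitly excluded).

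The key step of the argument is to show that the hyperedges of $\HH^T$ are pairwise disjoint. To see this, take any two distinct hyperedges $H_1,H_2\in\HH$ and suppose for contradiction that some vertex $v$ lies in $H_1^T\cap H_2^T$. Then $v\in H_1\cap H_2$, and because $\HH$ is linear we must have $H_1\cap H_2=\{v\}$. The intersection clause of Property S then forces $v$ to be a head-vertex in at least one of $H_1,H_2$, contradicting $v\in H_1^T\cap H_2^T$. Hence $H_1^T\cap H_2^T=\emptyset$ whenever $H_1\ne H_2$.

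Once this is in place, the proof finishes immediately: the family $\HH^T$ is a collection of pairwise disjoint sets each of size at least $2$, so one can properly $2$-color each $H^T$ independently (using both colors on each) and extend arbitrarily to the remaining vertices. Every $H\in\HH$ contains its tail-part $H^T$, which already contains both colors, so $H$ is non-monochromatic. There is essentially no obstacle to overcome once one sees the right reduction; the only subtle point is recognizing that linearity plus Property S is exactly what is needed to make the tail-parts disjoint, so the global coloring splits into independent local pieces.
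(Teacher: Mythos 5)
Your proof is correct, and it is genuinely different from --- and substantially shorter than --- the paper's argument. Both proofs hinge on the same key fact (linearity plus the intersection clause of Property S force every vertex to be a tail-vertex of at most one hyperedge), but you exploit it directly: the tail-sets are pairwise disjoint and, by the majority condition, each has size at least $2$, so you can make each one bichromatic independently and you are done. The paper instead runs an induction on the number of vertices: it encodes the same ``outdegree at most one'' observation in a directed bipartite incidence graph and then uses a degree-counting inequality (where the ``more tails than heads'' hypothesis enters as a net indegree bound on the hyperedge side) to produce a vertex of degree at most one, which drives the induction step. Your route buys a one-paragraph, constructive, non-inductive proof; the paper's route is heavier machinery than this special case needs. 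One small point to tidy up: a directed hypergraph can contain several hyperedges on the \emph{same} vertex set with different head/tail partitions (the paper's footnote defines linearity via the underlying non-directed hypergraph, which collapses such duplicates), and for these your disjointness claim fails --- e.g.\ the three hyperedges $ab\rightarrow c$, $bc\rightarrow a$, $ca\rightarrow b$ have pairwise intersecting tail-sets forming a triangle. This is harmless: first delete all but one hyperedge per vertex set (a proper coloring of the survivor properly colors the duplicates, since properness depends only on the underlying set), exactly as the paper does at the start of its proof of Theorem~\ref{thm:3unif}; after that reduction your argument goes through verbatim. You should also state explicitly, as you do parenthetically, that hyperedges of size at most one are excluded, since the theorem is vacuously false otherwise.
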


\smallskip
Our main result is a proof of Conjecture \ref{conj:majo} for the special case of $3$-uniform hypergraphs. Compare this with the problem about global restrictions on 1-intersections \cite{gyarfas} from Section \ref{sec:gyarfas}, where also the $3$-uniform case is the most general case solved so far. Note that Property S remains true in a directed hypergraph if in some hyperedge we change a tail-vertex to be a head-vertex as far as there are still less head-vertices than tail-vertices in the hyperedge. In particular, when considering Conjecture \ref{conj:majo} for $3$-uniform directed hypergraphs, it is enough to consider the case when every hyperedge has exactly one head-vertex, that is, it is a $2\rightarrow 1$ hypergraph. Conversely, in Property S the restriction on the number of head-vertices automatically holds for $2\rightarrow 1$ hypergraphs. Thus, the following theorem is equivalent to Conjecture \ref{conj:majo} for $3$-uniform hypergraphs:

\begin{theorem}\label{thm:3unif}
	Given a $2\rightarrow 1$ hypergraph $\HH$ with Property S, i.e.,
	 for every pair of hyperedges $H_1,H_2\in \HH$, if $H_1\cap H_2=\{v\}$ then $v$ is a head-vertex in at least one of the hyperedges. Then $\HH$ admits a proper $2$-coloring.
\end{theorem}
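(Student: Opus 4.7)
The plan is to argue by contradiction using a minimum counterexample $\HH$ and a $2$-coloring $\chi\colon V(\HH)\to\{R,B\}$ with the fewest monochromatic hyperedges. By assumption at least one hyperedge, say $e=(\{a,b\},c)$, is monochromatic, and we may suppose that $a,b,c$ are all red under $\chi$.

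The first task is to extract local structural information from the minimality of $\chi$ combined with Property S. For each tail vertex $v\in\{a,b\}$ of $e$, flipping $\chi(v)$ makes $e$ non-monochromatic, so by minimality of $\chi$ there must exist at least one new monochromatic hyperedge $f_v$, which contains $v$ and whose other two vertices are blue. Since the remaining vertices of $e$ are red while those of $f_v$ are blue, $e\cap f_v=\{v\}$, and Property S forces $v$ to be a head of at least one of $e,f_v$; as $v$ is a tail of $e$, this gives $f_v=(\{x_v,y_v\},v)$ with $x_v,y_v$ blue. Applying Property S once more to $f_a,f_b$ rules out $|f_a\cap f_b|=1$ (a single shared blue vertex would be a tail of both), so $|f_a\cap f_b|\in\{0,2\}$.

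The second task is to iterate. Passing to $\chi^{(1)}$ obtained from $\chi$ by flipping $a$, the hyperedge $f_a$ becomes monochromatic blue, $\chi^{(1)}$ is still a minimum coloring, and the same flipping-plus-Property-S reasoning applied to $f_a$ and one of its tail vertices $x_a$ produces a new monochromatic hyperedge whose head is $x_a$ and whose tails are red and different from $a$. Continuing in this manner yields a chain $e,f_a,g,h,\dots$ of hyperedges in which each subsequent hyperedge has its head equal to a tail of its predecessor, with the forced monochromatic color alternating between red and blue along the chain. Because $\HH$ is finite, the chain must eventually revisit a previously used focus vertex or hyperedge.

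The main obstacle is to exploit this closure. The goal is to show that at the first moment the chain returns, either two of its hyperedges share exactly one vertex which is a tail of both (directly contradicting Property S, mirroring the single-step argument), or the composition of the performed flips can be replaced by a single simultaneous recoloring of the touched vertices that strictly decreases the number of monochromatic hyperedges, contradicting the minimality of $\chi$. Additional case analysis will be needed to split on $|f_a\cap f_b|\in\{0,2\}$ at the very first step, and to handle the head vertex $c$ of $e$, where Property S imposes no restriction on the escape hyperedge $f_c$ so that $c$ may either be the head or a tail of $f_c$; the second sub-case is expected to require separate treatment. A careful parity argument on the length of the closing chain should pin down the contradiction.
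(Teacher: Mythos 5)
Your opening analysis is correct: in a coloring minimizing the number of monochromatic hyperedges, flipping a tail vertex $v$ of a monochromatic $e$ must create a new monochromatic hyperedge $f_v$ meeting $e$ exactly in $v$, and Property S then forces $v$ to be the head of $f_v$; likewise $|f_a\cap f_b|\ne 1$. This is a genuinely different route from the paper, which does not argue by local improvement at all for this theorem (it uses recoloring only for Claim \ref{claim:specboth} and Theorem \ref{thm:polyassim}); instead it encodes $\HH$ as a labeled graph $G$, classifies the local structure around each vertex (Lemma \ref{lem:x}), decomposes $V$ into cores and residual vertices, analyzes $G[R]$ and an auxiliary digraph $D$, and then exhibits an explicit four-phase coloring, with a separate treatment of parallel edges. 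However, your proposal has a genuine gap precisely where the difficulty lies, so as written it is not a proof.

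Two concrete problems. First, the iteration is built on the assertion that $\chi^{(1)}$ (obtained from $\chi$ by flipping $a$) ``is still a minimum coloring.'' Minimality of $\chi$ only gives that the count does not \emph{decrease}; flipping $a$ can destroy the single hyperedge $e$ while creating several monochromatic hyperedges with head $a$ and blue tails (these pairwise intersections contain $a$ as a common head, so Property S permits arbitrarily many of them). Then $\chi^{(1)}$ is strictly worse and the whole chain argument, which repeatedly invokes minimality of the current coloring, no longer applies. Second, and more fundamentally, the closure step is not an argument but a statement of intent: you say the goal is to show that when the chain first returns one either finds two hyperedges sharing a single common tail vertex or a simultaneous recoloring that improves $\chi$, and you defer the case analysis and the ``careful parity argument.'' Unlike the proof of Theorem \ref{thm:polyassim}, where each step stays inside the shrinking common intersection $H_1\cap\dots\cap H_i$ and must terminate within $c$ steps, here each step moves to two entirely new vertices $x_v,y_v$, the chain can branch, and a simultaneous flip of all touched vertices can create monochromatic hyperedges straddling several of them; none of this is controlled. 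The cyclically directed odd cycles that the paper must handle specially in Phase 2 (where one edge of $G$ is necessarily monochromatic and the corresponding hyperedge is saved only because its head lies elsewhere on the cycle) are exactly the kind of configuration on which a naive closing of your chain yields no contradiction. Until the closure step is actually carried out, the proposal does not establish the theorem.
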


Let $\cS$ be the $2\rightarrow 1$ hypergraph with vertex set $\{1,2,3,4,5\}$ and with hyperedge set $\{12 \rightarrow 3,14 \rightarrow 5\}$.\footnote{In a $2\rightarrow 1$ hypergraph a hyperedge with tail-vertices $a$ and $b$ and head-vertex $c$ is written as $ab\rightarrow c$.} Notice that a $2\rightarrow 1$ hypergraph has Property S if and only if it avoids $\cS$. Therefore, Theorem \ref{thm:3unif} states that if a $2\rightarrow 1$ hypergraph avoids $\cS$ then it admits a proper $2$-coloring. Note that from Claim \ref{claim:size} it follows that $ex(n,\cS)=O(n^2)$. In \cite{21cameron} for all possible $2\rightarrow 1$ hypergraphs with two edges the corresponding extremal problem was considered and about $\cS$ it was shown that $ex(n,\cS)=\binom{n+1}{2}-3$ and $ex_0(n,\cS)={\lfloor \frac{n}{2}}\rfloor(n-2)$.

Let us finish with answering also an extremal problem about $2\rightarrow 1$ hypergraphs. The $2\rightarrow 1$ hypergraph $\FF_0=\{12\rightarrow 3,13\rightarrow 2,23\rightarrow 1\}$ is an example for which we have that $ex(n,\FF_0)-ex_o(n,\FF_0)=\Omega(n^3)$. Note that $\FF_0$ is not oriented and no oriented $2\rightarrow 1$ hypergraph with the same property has been known. Answering a question of A. Cameron \cite{21cameron}, here we show that a diluted version of $\FF_0$ is an oriented $2\rightarrow 1$ hypergraph that has this property:

\begin{theorem}\label{thm:21ex}
	Let $\FF$ be the oriented $2\rightarrow 1$ hypergraph with vertex set $\{1,2,3,4,5\}$ and with hyperedge set $\{12\rightarrow 3,13\rightarrow 4,23 \rightarrow 5,14 \rightarrow 2,25 \rightarrow 1\}$. 
	Then $ex_o(n,\FF)=\binom{n}{3}$ while $ex(n,\FF)\ge 2\binom{n}{3}$.
\end{theorem}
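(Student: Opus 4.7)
The upper bound $ex_o(n,\FF)\le\binom{n}{3}$ is immediate, since an oriented $2\to 1$ hypergraph has at most one hyperedge per triple. My plan is to establish both lower bounds from the same construction principle: fix a linear order $1<2<\cdots<n$ on the vertex set and take only hyperedges in which the head-vertex is \emph{not} the minimum of its three vertices. For $ex_o(n,\FF)\ge\binom{n}{3}$ I would use the hypergraph $\HH_1$ with hyperedges $xy\to z$ for every $x<y<z$; this is oriented and has exactly $\binom{n}{3}$ hyperedges. For $ex(n,\FF)\ge 2\binom{n}{3}$ I would take $\HH_2$ with both $xy\to z$ and $xz\to y$ for every $x<y<z$, giving $2\binom{n}{3}$ hyperedges. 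In both constructions the head-vertex of every hyperedge is the middle or the maximum of its triple---never the minimum.

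The key observation about $\FF$ is that each of its five vertices appears as the head of some hyperedge: $1$ is the head of $25\to 1$, $2$ of $14\to 2$, $3$ of $12\to 3$, $4$ of $13\to 4$, and $5$ of $23\to 5$. Granted this, the non-embedding argument is essentially one line. Suppose $\phi\colon\{1,\ldots,5\}\to[n]$ is an injective map realizing $\FF$ as a subhypergraph of $\HH_2$, and let $v\in\{1,\ldots,5\}$ minimize $\phi(v)$. Choose a hyperedge $ab\to v$ of $\FF$ having $v$ as head. Then $\phi(v)<\phi(a),\phi(b)$, so in the corresponding hyperedge of $\HH_2$ the head-image $\phi(v)$ is the minimum of its three vertices, contradicting the defining property of $\HH_2$. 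The same argument applies verbatim to $\HH_1$, which is even more restrictive.

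The main obstacle is really only noticing the right structural feature of $\FF$---that every one of its vertices occurs as a head---which pairs naturally with the ``head not minimum'' construction and yields both bounds at once. Once this observation is extracted, counting the hyperedges of $\HH_1$ and $\HH_2$ and executing the contradiction requires no calculation, and both parts of the theorem follow immediately.
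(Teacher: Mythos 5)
Your proof is correct. The construction is the mirror image of the paper's: the paper takes, for each triple $i<j<k$, the hyperedges $ik\rightarrow j$ and $jk\rightarrow i$, so that the head is never the \emph{maximum} of its triple, whereas you forbid the head from being the \emph{minimum}; both yield $2\binom{n}{3}$ hyperedges, and keeping one hyperedge per triple gives the oriented bound in both cases. The genuine difference is in the non-containment argument. The paper writes out five inequalities ($i_3<\max(i_1,i_2)$, $i_4<\max(i_1,i_2)$, and so on) and derives that both $i_1$ and $i_2$ are smaller than $\max(i_1,i_2)$. You instead isolate the structural feature that makes this chain close up, namely that every vertex of $\FF$ is the head of some hyperedge, after which the extremal vertex of the embedding (your global minimum; in the paper's mirrored setup it would be the global maximum) gives the contradiction in one line. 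Your version is shorter and more conceptual: it shows that the same construction excludes \emph{any} $2\rightarrow 1$ hypergraph in which every vertex occurs as a head, which also explains the paper's unproved remark that $\{12\rightarrow 3,13\rightarrow 4,23\rightarrow 5,34\rightarrow 2,35\rightarrow 1\}$ would work equally well. All the routine points (the trivial upper bound $ex_o(n,\FF)\le\binom{n}{3}$, the count $2\binom{n}{3}$, and the passage from $\HH_2$ to the oriented subfamily $\HH_1$) are handled correctly.
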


\section{The short proofs}\label{sec:shortproofs}

%
%

A very nice result of Berge \cite{berge} gives a sufficient condition for a hypergraph to admit a polychromatic $c$-coloring. It states that if every induced subhypergraph\footnote{Given a subset $S\subset V$ of the vertices of $\HH(V,E)$, the subhypergraph induced by $S$ has vertex set $S$ and hyperedge set $\{S\cap H:H\in E, |S\cap H|\ge 2\}$.} of a hypergraph $\HH$ is proper $2$-colorable then we can $c$-color the vertices of $\HH$ such that every hyperedge $H$ contains vertices with $\min\{|H|,c\}$ different colors. 
This recoloring idea can be used to give a different proof of Theorem \ref{thm:lovasz} and it can also be used to prove its generalization to polychromatic $c$-colorings:

\begin{proof}[Proof of Theorem \ref{thm:polyassim}]
	The colors are denoted by the numbers of $[c]$. Note that a set of vertices is allowed to be a hyperedge of $\HH_i$ for multiple $i$'s.
	
	Given a coloring, we call a hypergraph $H$ good if it contains a vertex with the required color (that is, with color $i$ if $H\in \HH_i$). We call a coloring good if all its hyperedges are good. We start with an arbitrary coloring of the vertices of $\HH$. If this is a good $c$-coloring then we are done. From now on we assume that this is not the case and we shall improve the coloring. By improvement we mean that the number of hyperedges that do not have a vertex with the required color decreases in each step, call this function $f$.
	
	There is some color, wlog. $1$, such that there is an edge $H_1\in \HH_1$ that misses color $1$. Take an arbitrary vertex $v_2$ of $H_1$, wlog. it has color $2$. Recoloring $v_2$ to color $1$ makes $H_1$ good. Then $f$ decreases unless there is a hyperedge $H_2\in \HH_2$ containing $v_2$ and no other vertex with color $2$ (in which case we do not recolor $v_2$). Now $|H_1\cap H_2|\ge 2$ by the assumption of the theorem, thus there is another vertex $v_3$ in $H_1\cap H_2$, which has some color different from $1$ and $2$, wlog. it is colored $3$. Similarly as before, recoloring $v_3$ to color $1$ makes $H_1$ good. Then $f$ decreases unless there is a hyperedge $H_3\in \HH_3$ containing $v_3$ and no other vertex with color $3$ (in which case we do not recolor $v_3$). In a general step we have already found  hyperedges $H_2\in \HH_2,\dots ,H_i\in \HH_i$ and vertices $v_2\in H_1\cap H_2,v_3\in H_1\cap H_2\cap H_3\dots ,v_i\in H_1\cap H_1\cap H_2\cap\dots \cap H_i$ with the property that for every $j$ ($2\le j\le i$) $v_j$ is the only vertex of $H_j$ with color $j$. Now for the intersection $X_i=H_1\cap H_2\cap\dots ,\cap H_i$ as it is none-empty, we have $|X_i|\ge i$ by the assumption of the theorem, thus there exists a vertex $v_{i+1}\in X_i\setminus \{v_2,\dots ,v_{i}\}$. Notice that the color of $v_{i+1}$ is not in $[i]$, wlog. it has color $i+1$. Now recoloring $v_{i+1}$ to color $1$ makes $H_1$ good. Then $f$ decreases unless there is a hyperedge $H_{i+1}\in \HH_{i+1}$ containing $v_{i+1}$, in which case we do not recolor $v_{i+1}$ but continue this process (note that $v_{i+1}\in H_1\cap H_1\cap H_2\cap\dots \cap H_i\cap H_{i+1}$).

	By the end of the process we either can recolor some vertex of $H_1$ to decrease $f$ or find a vertex $v_{c+1}$ that must have color not in $[c]$, which is a contradiction. Thus there was a valid recoloring. We can repeat this whole process until $f$ reaches $0$, which means that we got a good $c$-coloring.
\end{proof}

\begin{proof}[Proof of Claim \ref{claim:specboth}]
	The proof is also based on the recoloring idea used in the previous proof.
	We start with an arbitrary $2$-coloring of the vertices of $\HH$. If this is a proper $c$-coloring then we are done. From now on we assume that this is not the case and we shall improve the coloring. By improvement we mean that the number of hyperedges that are monochromatic decreases in each step, call this function $f$.
	
	If a hyperedge $H$ is monochromatic, then recolor one of its tail-vertices $v$ to the other color. Assume this made another hyperedge $H'$ monochromatic. This means that $v=H\cap H'$. As $v$ is a tail-vertex in $H$, this is a contradiction. Thus, this recoloring decreased $f$. Repeating this process until $f$ reaches $0$ we get a proper $2$-coloring of $\HH$, as required.	
\end{proof}

\noindent{\bf Construction.}
Next we construct an example showing that in Conjecture \ref{conj:majo} the assumption on the number of head-vertices in a hyperedge cannot be increased, i.e. we cannot change `less than' to `at most'. To see this, take a vertex set $V$ with $3k-3$ vertices split into three equal parts, $V_0,V_1,V_2$, all having size $k-1$. Now let the hyperedges of $\HH$ with vertex set $V $ be the sets which for some $i\in \{0,1,2\}$ contain $k$ vertices from $V_i$ and $k$ vertices from $V_{i+1}$ (indices are modul0 $3$). For such a hyperedge its vertices in $V_i$ are its head-vertices. Thus we got a directed hypergraph in which for each hyperedge half of its vertices is a head-vertex, if two hyperedges intersect then their intersection contains at least one vertex which is a head-vertex in one of them. We claim that $\HH$ does not admit a proper $2$-coloring. Assuming on the contrary that there is a proper $2$-coloring, then there is an index $i$ such that both in $V_i$ and $V_{i+1}$ the majority color is the same in this coloring, say red. That is, in both $V_i$ and $V_{i+1}$ there are $k$ red vertices. Thus there is a hyperedge on $\HH$ using only these vertices, which is then monochromatic, a contradiction.

\begin{proof}[Proof of Claim \ref{claim:size}]
	We count the pairs $(H,v)$ where $v\in H$ is a tail-vertex of $H$.
	Fix any vertex $v_0$ and count such pairs $(H,v_0)$. If there is at least one, $(H_0,v_0)$, then for every other such pair $(H',v_0)$ we must have $|H'\cap H_0|\ge 2$ as otherwise they would intersect only in $v$ which is a tail-vertex in both $H_0$ and $H'$, contradicting our assumption. The number of hyperedges that intersect $H_0$ in at least two vertices is $O(n^{k-2})$ and thus this also upper bounds the number of pairs $(H,v_0)$. Summing over all $n$ vertices, we have at most $O(n^{k-1})$ pairs $(H,v)$. Thus, the size of the hypergraph is also upper bounded by $O(n^{k-1})$ as every hyperedge appears in at least one pair.
\end{proof}

\begin{proof}[Proof of Theorem \ref{thm:linear}]
	We prove by induction on the number of vertices. The theorem holds when the hypergraph has no hyperedges or we have $2$ vertices. From now on $\HH$ has at least $3$ vertices and at least one hyperedge. We prove by induction on the number of vertices of $\HH$.
	
	First, if there is a vertex with degree zero then we are done by coloring this vertex arbitrarily and applying induction on the hypergraph with the remaining vertices as its vertex set. Thus we can assume that there is no such vertex.
	
	Now, if we can find a vertex with degree one then we are done by induction. Indeed, in this case we can delete this vertex and the unique hyperedge containing it and color the remaining hypergraph by induction. Afterwards, we can put back the hyperedge and the vertex, the latter we color such that this hyperedge is not monochromatic.
	
	We are left to find a vertex with degree one. Let $G$ be the bipartite graph in which one part $A$ corresponds to the vertices of $\HH$, the other part $B$ corresponds to the hyperedges of $\HH$ and the edges correspond to incidences in the hypergraph. Direct the edges of $G$ such that the edge $(H,v)$ is directed towards $v$ if and only if $v$ is a tail-vertex in $H$. Let $n_k$ be the number of vertices in $A$ with (undirected) degree $k$ (for $k\ge 1$). Observe that $\sum_k n_k=n$.
	
	Let $a$ (resp. $b$) be the number of edges directed towards $A$ (resp. $B$). 
	
	First, observe that the outdegree of every vertex $v$ in $A$ is at most one. Indeed, if it would be two then there would be two hyperedges with $v$ in their intersection such that $v$ is a tail-vertex in both of them, also by linearity $v$ would be their only intersection, contradicting the assumption of the theorem. Thus, the indegree of every vertex $v$ in $A$ is at least $deg_G(v)-1$. Thus, $b\le |A|=n$ and $a\ge \sum_k(k-1)n_k$. By the assumption, at each vertex in $B$ the indegree minus the outdegree is at least $1$, summing this over every vertex of $B$ we get that $|B|\le b-a$. 
	
	Putting these three inequalities together we get that $1\le |B|\le b-a\le n-\sum_k (k-1)n_k=\sum_k n_k-\sum_k (k-1)n_k=\sum_k(2-k)n_k\le n_1$. That is, there is a vertex with degree one, as required.	
\end{proof}

\begin{proof}[Proof of Theorem \ref{thm:21ex}]	
	First we prove that $ex(n,\FF)\ge 2\binom{n}{3}$.
	Let $\HH$ be the non-oriented hypergraph with vertex set $\{1,2,3,\dots, n\}$ which contains for all triples $i<j<k$ the hyperedges $(ik,j)$ and $(jk,i)$. The important property of $H$ is that for an arbitrary edge with tail vertices $a,b$ and head vertex $c$ it always holds that $c<\max(a,b)$. This hypergraph has $2\binom{n}{3}$ hyperedges so it remains to prove that $\HH$ does not contain $\FF$.
	
	Suppose on the contrary that $\HH$ contains $\FF$, let a copy of $\FF$ be on the vertices $i_1,i_2,i_3,i_4,i_5$ (where $i_j$ corresponds to vertex $j$ of $\FF$). 
	Now using the above observation to each edge of this copy of $\FF$ we get that 
	$$i_3<\max(i_1,i_2),$$
	$$i_4<\max(i_1,i_3)\le \max(i_1,i_2),$$
	$$i_5<\max(i_2,i_3)\le \max(i_1,i_2),$$
	$$i_2<\max(i_1,i_4)\le \max(i_1,i_2),$$
	$$i_1<\max(i_2,i_5)\le \max(i_1,i_2).$$
	
	Thus both $i_1$ and $i_2$ are smaller than $\max(i_1,i_2)$, a contradiction.
	
	We note that e.g. $F=\{(12,3),(13,4),(23,5),(34,2),(35,1)\}$ would be a good example as well. 
		
	To see the other statement of the theorem, first note that $ex_o(n,\FF)\le \binom{n}{3}$ is trivial as there are at most this many subsets of size $3$. Equality also follows by taking in the above $\HH$ for each triple of vertices only one of the two hyperedges whose vertex set is this triple. This gives an oriented hypergraph with $\binom{n}{3}$ hyperedges and of course it still does not contain $\FF$.
\end{proof}

\section{Proof of Theorem \ref{thm:3unif}}

We are given a $2\rightarrow 1$ hypergraph $\HH$ on vertex set $V$ with Property S. We want to show that it is proper $2$-colorable. $\HH$ may have multiple hyperedges on the same subset but with different special vertices, in which case we can just delete all but one of these hyperedges and a proper $2$-coloring of the new (non-oriented) hypergraph is also a proper $2$-coloring of the original. Thus from now on we assume that every $3$-subset of the vertices is the vertex set of at most one hyperedge of $\HH$.

We define a labeled graph $G$ on $V$ in which the labels are also the vertices of $\HH$, as follows. For each hyperedge $ab\rightarrow s$ of $\HH$ we assign with it an edge of $G$ on vertices $a,b$ with label $s$ (and there are no other edges in $G$). The label of an edge $e$ is denoted by $l(e)$. For a subset $S$ of $V$, $G[S]$ denotes the subgraph of $G$ on vertex set $S$ induced by the vertices of $S$.

Observe that no edge incident to some vertex $a$ can have label $a$.  Note that there may be parallel edges in $G$. We may refer to an edge on vertices $a,b$ as $ab$ when it does not lead to confusion (in particular, there are no other edges on the same vertices).

The next observation gives the property of $G$ corresponding to Property S of $\HH$ (they are in fact equivalent):

\begin{observation}\label{obs:2path}
	Given vertices $a,b,c$ and a 2-path in $G$ with edge $e$ on $a,b$ and $f$ on $b,c$. If $e$ and $f$ have different labels, then either $l(e)=c$ or $l(f)=a$ (or both).
\end{observation}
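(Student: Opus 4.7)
The plan is to translate the graph statement back into a statement about the hyperedges of $\HH$ and apply Property~S directly. The edges $e$ and $f$ correspond to the hyperedges $H_e = ab \to l(e)$ and $H_f = bc \to l(f)$, and the key feature is that the middle vertex $b$ of the 2-path appears as a tail-vertex in both of these hyperedges.

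Next I would argue that $|H_e \cap H_f| \geq 2$. Indeed, if the intersection were just $\{b\}$, then $b$ would be a tail-vertex of both hyperedges while being their only common vertex, in direct violation of Property~S. Since $a,b,c$ are distinct (they are the three vertices of a 2-path), the hyperedges $H_e$ and $H_f$ have different tail-sets and are therefore distinct, so Property~S applies and forces a second vertex into the intersection.

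That second common vertex must lie in $\{a, l(e)\} \cap \{c, l(f)\}$. I would then enumerate the four possible coincidences $a=c$, $l(e)=l(f)$, $l(e)=c$, and $a=l(f)$. The first is ruled out by distinctness of the endpoints of the 2-path, the second by the hypothesis that $e$ and $f$ have different labels, and the remaining two are exactly the conclusions $l(e)=c$ and $l(f)=a$ claimed by the observation.

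I do not expect any real obstacle here: the observation is essentially a dictionary between Property~S and the labeled graph $G$. The only points to watch are keeping track of the distinctness of $a,b,c$ and recalling the construction-level fact that no edge of $G$ incident to a vertex $v$ can carry the label $v$, so no further degenerate cases can arise.
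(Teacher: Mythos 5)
Your proof is correct and is exactly the reasoning the paper intends: the paper states this observation without proof, presenting it as the direct translation of Property~S into the language of the labeled graph $G$, which is precisely the dictionary you spell out. Your care with the degenerate cases ($a\ne c$ on a 2-path, $l(e)\ne l(f)$ by hypothesis, and no edge carrying its own endpoint as label) covers everything needed.
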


To this end let us call in a graph a 2-path on the vertices $(a,b,c)$ with its edges $e,f$ ($e$ on $a,b$ and $f$ on $b,c$) having different labels \emph{good}, if either $l(e)=c$ or $l(f)=a$ (or both), and \emph{bad} otherwise. Let us call a graph \emph{good} if all its 2-paths vertices with two differently labeled edges are good. Using this notation, $G$ must be a good graph.

As we noted, there may be parallel edges between two vertices (which then must have different labels).  We first prove Theorem \ref{thm:3unif} in the special case when there are no parallel edges in $G$ (phrased later explicitly as Theorem \ref{thm:3unifweak}). Afterwards, we will extend our proof also to the case when $G$ has parallel edges.

Thus from now on we assume that there are no parallel edges until stated otherwise, and thus $G$ is a simple graph.

\begin{lemma}\label{lem:x} If there are no parallel edges in $G$ then the following are true:
	
	\begin{itemize}
		\item[(i)] No vertex is incident to edges of $4$ different labels.
		\item[(ii)] If a vertex $x$ is incident to edges with $3$ different labels then there are exactly $3$ incident edges and they must have the following form: $xa$ with label $b$, $xb$ with label $c$ and $xc$ with label $a$.
		\item[(iii)] If a vertex $x$ is incident to edges of $2$ different labels then one of these two labels appears on only one edge incident to $x$ and the other endvertex of this edge is the other label that appears on the edges incident to $x$.
	\end{itemize}
\end{lemma}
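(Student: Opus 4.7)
The plan is to prove all three parts through a single arrow-counting device that quantifies \oref{2path} at the vertex $x$. For any two distinct edges $e = xa$ and $f = xb$ incident to $x$ with distinct labels, the 2-path $(a, x, b)$ forces $l(e) = b$ or $l(f) = a$; I will record these alternatives as arrows $e \to f$ and $f \to e$ on the set of edges at $x$. Since $G$ is simple, the other endvertices of distinct edges at $x$ are themselves distinct, and since $l(e)$ is a single vertex (and cannot equal $x$ or the other endpoint of $e$), each edge emits at most one outgoing arrow. This single bookkeeping lemma drives everything.

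For (i), if $x$ were incident to four edges with four distinct labels, the $\binom{4}{2}=6$ pairs would force at least $6$ arrows while the $4$ edges emit at most $4$, a contradiction. For (ii), the same count applied to three edges $xa, xb, xc$ at $x$ with distinct labels yields at least $3$ arrows and at most $3$, so equality must hold throughout: each edge has outdegree exactly one, forcing $l(xa), l(xb), l(xc) \in \{a,b,c\}$. Combined with $l(xa) \ne a$, etc., the triple $(l(xa), l(xb), l(xc))$ is a derangement of $(a,b,c)$, i.e.\ one of the two $3$-cycles, giving the cyclic form of (ii) up to relabeling. To rule out a fourth edge $xd$ at $x$, note that by (i) its label lies in $\{a,b,c\}$ and so duplicates one of the existing three labels; a short case check against the 2-path condition on $(d, x, y)$ for an appropriate $y \in \{a,b,c\}$ produces a bad 2-path, contradicting \oref{2path}.

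For (iii), let $l_1, l_2$ be the two labels at $x$ and $E_1, E_2$ the corresponding sets of incident edges. I first eliminate the case $|E_1|, |E_2| \ge 2$: pick $xa_1, xa_2 \in E_1$ and $xb_1, xb_2 \in E_2$; among $i \in \{1,2\}$ at most one can satisfy $l_2 = a_i$, and for the other value of $i$ the two 2-paths $(a_i, x, b_j)$ force $l_1 = b_1 = b_2$, contradicting $b_1 \ne b_2$. Hence one class, say $E_1$, is a singleton $\{xa\}$. If $|E_2| \ge 2$, the 2-paths from $xa$ to two edges of $E_2$ force $l_2 = a$, matching the conclusion. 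If $E_2 = \{xb\}$ is also a singleton, the single 2-path $(a, x, b)$ yields $l_1 = b$ or $l_2 = a$, either alternative being one of the two symmetric conclusions of (iii). The main obstacle is the bookkeeping for the fourth-edge ruling-out in (ii); the counting steps underlying (i), (ii), and (iii) are essentially pigeonhole once the arrow framework is in place.
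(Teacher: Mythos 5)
Your proof is correct and rests on the same engine as the paper's: repeated application of \oref{2path} to the 2-paths centered at $x$, with the same case analyses for ruling out a fourth edge in (ii) and for the three subcases of (iii). The only difference is the arrow/pigeonhole bookkeeping, which packages the paper's direct ``wlog $l(ax)=b$, hence $l(cx)=a$, hence $l(bx)=c$'' chain into a count (at least one arrow per pair of distinctly labeled edges versus at most one outgoing arrow per edge); this is a cosmetic streamlining of (i) and of the derangement step of (ii) rather than a different route.
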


\begin{proof}
		Assume first that $x$ is incident to $3$ edges with $3$ different labels (there might be further edges incident to $x$). Let the other endvertices of these edges be $a,b,c$. Using that the path $axb$ is good we get that $l(ax)=b$ or $l(bx)=a$. We assume $l(ax)=b$, as the other case can be finished similarly. Using that the path $axc$ is good we get that $l(ax)=c$ or $l(cx)=a$, as the first does not hold, we must have $l(cx)=a$. Finally, using that the path $bxc$ is good we get that $l(bx)=c$ or $l(cx)=b$. As the second does not hold, we must have $l(bx)=c$.
		
		Now assume that $x$ is incident to another edge, $xd$. Using that the paths $dxa$ and $dxb$ are good implies that $dx$ must have label $a$ and $b$ as well, a contradiction. Thus, there cannot be further incident edges, proving $(i)$ and $(ii)$.
		
		To show $(iii)$ assume first that there is a vertex $x$ such that $x$ is incident to edges $e_1,e_2,f_1,f_2$ such that $l(e_1)=l(e_2)=a$ and $l(f_1)=l(f_2)=b$. Then there is an index $i$ and $j$ such that $e_i$ is not incident to $b$ and $f_j$ is not incident to $a$. The path $e_if_j$ is bad, a contradiction. 
		Now assume that $x$ is incident to edges $e_1,e_2,f_1$ such that $l(e_1)=l(e_2)=a$ and $l(f_1)=b$. Using that $e_1f_1$ and $e_2f_1$ are both good we get that $f_1$ must have endvertices $x$ and $a$, as claimed.
		Finally, if $x$ is incident only to two edges and they have different labels then, using that they form a good path, one of them must have the label of the end of the other one, as required.
\end{proof}

\begin{figure}
	\begin{center}
		\includegraphics[width=6cm]{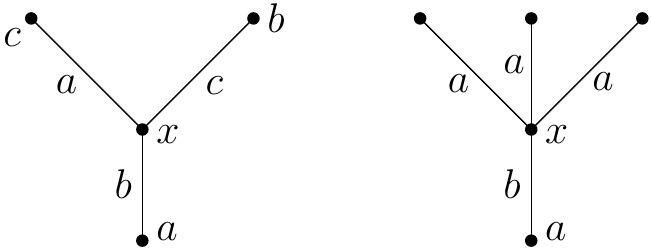}
		\caption{The possible structures guaranteed by Lemma \ref{lem:x}.}
		\label{fig:abc}
	\end{center}
\end{figure} 

For a vertex $q$ we define $K(q)$ (the \emph{core} or kin of $q$) to be the set of vertices that have at least $2$ incident edges with label $q$ (note that $K(q)$ does not contain $q$ and can be empty). Lemma \ref{lem:x}(iii) implies that $K(q)\cap K(r)=\emptyset$ for $q\ne r$. Let $K$ be the union of the cores and let $R$ (the residual vertices) be the vertices of $G$ that are not in any of the $K(q)$'s, that is, $R=V\setminus K$. 

First we give an intuition of our coloring strategy. In a $2$-coloring of the vertices a hyperedge $ab\rightarrow c$ is non-monochromatic either because the associated edge $ab$ in $G$ is non-monochromatic or because one of $a$ and $b$ gets a color different from $c=l(ab)$. We will color the vertices in $R$ such that most of the edges in $G[R]$ are non-monochromatic, thus corresponding hyperedges are non-monochromatic for the first reason. On the other hand we will color the vertices in $K$ such that for any vertex $x$ all but at most one of the vertices in $K(x)$ get a color different from $x$ which guarantees that most of the hyperedges corresponding to edges incident to some $K(x)$ are non-monochromatic for the second reason. 

\begin{observation}\label{obs:cq}
	A core $K(r)$ induces only edges with label $r$.	
\end{observation}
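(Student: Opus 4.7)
The plan is to take any edge $e = xy$ lying inside $G[K(r)]$ and show that $l(e) = r$, using Lemma \ref{lem:x} as the sole tool.

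First I would record that, by definition of $K(r)$, the vertex $x$ is incident to at least two edges carrying the label $r$. This immediately rules out the configuration of Lemma \ref{lem:x}(ii): that part says that whenever a vertex sees three distinct labels, each label appears exactly once on an incident edge, contradicting the multiplicity of $r$ at $x$. Combined with Lemma \ref{lem:x}(i), this means that all edges incident to $x$ carry labels from a set of size at most $2$, one of which must be $r$.

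If $r$ is the only label appearing at $x$, then $l(xy) = r$ and we are finished. Otherwise let $s \neq r$ be the second label at $x$. Applying Lemma \ref{lem:x}(iii) to $x$ with the two labels $r$ and $s$: since $r$ already occurs at least twice, $s$ must occur on exactly one edge at $x$, and the other endpoint of that unique $s$-edge is forced to be the vertex named $r$. So the only edge incident to $x$ whose label is not $r$ connects $x$ to the vertex $r$ itself. Since by definition $r \notin K(r)$, the vertex $y \in K(r)$ is not equal to $r$, so the edge $xy$ cannot be that distinguished $s$-edge, and hence $l(xy) = r$.

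There is no real obstacle here: the observation is a short structural consequence of Lemma \ref{lem:x}. The only two things to keep an eye on are that $K(r)$ excludes $r$ (which is what breaks the one remaining case), and that the argument is completely symmetric in $x$ and $y$, so no auxiliary assumption on the direction of the analysis is needed.
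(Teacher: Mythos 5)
Your proof is correct and takes essentially the same route as the paper's: both reduce the claim to Lemma~\ref{lem:x}(iii) applied to an endpoint of an edge induced by $K(r)$, using that $r\notin K(r)$ to reach the contradiction. You merely make explicit the preliminary step (ruling out three distinct labels at a vertex of $K(r)$ via parts (i)--(ii)) that the paper leaves implicit.
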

\begin{proof}
	Assume on the contrary that there is an edge with label $q$ with both endvertices in $K(r)$. Applying Lemma \ref{lem:x}(iii) on the two endvertices of this edge implies that both endvertices must be equal to $r$, a contradiction.
\end{proof}

Now we proceed by understanding the structure of $G[R]$. Notice that by definition of $R$, in $G[R]$ all pairs of edges forming a 2-path have different labels and so they must form a good path.

We partially direct the edges of $G$. An edge $ab$ is directed towards $a$ (and away from $b$) if $a$ is adjacent to the vertex $l(ab)$ in $G$. There can be undirected edges and edges directed in both ways. If an edge $ab$ is directed towards $a$ but not towards $b$ then we say that it is directed only towards $a$.

\begin{lemma}\label{lem:gr}
	 If there are no parallel edges in $G$ then the following are true:
	\begin{itemize}
		\item[(i)] For any vertex $x\in R$ and two edges incident to $x$ in $G$, at least one of these edges must be directed towards $x$. That is, at most one edge indicent to $x$ in $G$ is not directed towards $x$.
		\item[(ii)] If an edge $ab\in G[R]$ is directed in both ways then the vertex $c=l(ab)$ is not in $R$ and $l(ac)=l(bc)$ is a label different from $a$ and $b$.		
		\item[(iii)] 
		If a vertex $x\in R$ is incident to some edge $xb$ in $G[R]$ directed not only towards $x$ then there can be at most one further edge in $G[R]$ incident to $x$ and this edge (if it exists) must have label $b$ and directed only towards $x$.
	\end{itemize}	
\end{lemma}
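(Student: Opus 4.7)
My plan uses two tools throughout: goodness of $2$-paths (\oref{2path}), and the standing assumption that each $3$-subset of $V$ supports at most one hyperedge, which in $G$ means that for each $3$-subset $\{a,b,c\}$ at most one of the edges $ab$ with label $c$, $ac$ with label $b$, and $bc$ with label $a$ appears. I would prove the items in the stated order; part (iii) invokes part (ii).

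For part (i), at $x \in R$ the labels of any two incident edges $xb_1, xb_2$ are distinct (else $x$ would lie in some core), so the $2$-path $b_1 x b_2$ is good: WLOG $l(xb_1) = b_2$. Since $x$ is adjacent to $b_2$ via $xb_2$, this means by definition that $xb_1$ is directed toward $x$.

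For part (ii), if $ab \in G[R]$ is directed both ways then $c := l(ab)$ is adjacent to both $a$ and $b$. Writing $p = l(ac)$ and $q = l(bc)$, if $p \neq q$ then the $2$-path $acb$ forces $p = b$ or $q = a$; but either would make one of $ac, bc$ share its endpoints-plus-label $3$-set with $ab$, contradicting uniqueness. Hence $p = q$, which puts $c \in K(p)$ and therefore $c \notin R$; the same uniqueness reasoning rules out $p \in \{a,b\}$, while $p \neq c$ is automatic.

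For part (iii), assume $xb \in G[R]$ is not only directed toward $x$, and let $xy \in G[R]$ be a second edge at $x$. By goodness of $bxy$, either $l(xy) = b$ or $l(xb) = y$; the latter would make $xb$ directed toward $x$ (via $xy$) and, since $xb$ is not only toward $x$, also toward $b$, so part (ii) applied to $xb$ would force $y = l(xb) \notin R$, contradicting $xy \in G[R]$. Hence $l(xy) = b$, and $xy$ is directed toward $x$ since $x$ is adjacent to $b$. If $y$ were also adjacent to $b$ via an edge $yb$ with label $z$, then $l(xb) \neq z$ (else $b \in K(z)$, contradicting $b \in R$), so goodness of $xby$ yields $l(xb) = y$ or $z = x$; the first was just excluded, and the second makes $yb$ and $xy$ duplicate the $3$-set $\{x,y,b\}$, impossible. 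Therefore $xy$ is only directed toward $x$. Two further edges at $x$ in $G[R]$ would both be labeled $b$, putting $x \in K(b)$ contrary to $x \in R$. The main obstacle I anticipate is the case work inside part (iii), where the invocation of part (ii) must be carefully interleaved with two separate uniqueness arguments.
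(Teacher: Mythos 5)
Your proof is correct and follows essentially the same route as the paper: all three parts rest on goodness of $2$-paths together with the standing one-hyperedge-per-triple assumption, with part (ii) feeding into part (iii) exactly as in the paper. The only cosmetic difference is in (iii), where you rule out the second edge being directed toward $y$ by directly examining a putative edge $yb$, whereas the paper bootstraps by applying part (ii) (or the already-proved case of (iii)) to that second edge; both arguments are equally valid.
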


\begin{proof}
	Part (i) follows from the fact that two edges incident to $x$ must form a good path (as the edges incident to an $x\in R$ have different labels).

	For part (ii) $ab$ being directed in both ways means that both $a$ and $b$ are connected with $c=l(ab)$. If the labels of $ac$ and $bc$ differ then the path $acb$ must be good therefore either $l(ac)=b$ or $l(bc)=a$, in both cases there would be two hyperedges on these $3$ vertices, a contradiction. Thus $l(ac)=l(bc)$ in which case $c\in K=V\setminus R$ by definition of the cores. Also, $a\ne l(ac)=l(bc)\ne b$, as claimed.
	
	For part (iii) assume first that there is an edge $xb$ incident to $x$ in $G[R]$ directed both ways. Let $xd$ be another edge incident to $x$ in $G[R]$. By part (ii) $l(xb)\notin R$ and thus $l(xb)\ne d$ and as $dxb$ is a good path, we have that $l(xd)=b$ and so $xd$ is directed towards $x$. If it would be also directed away from $x$ then we could apply part (ii) to the edge $xd$ to conclude that $b\notin R$, a contradiction. 
	
	Assume next that $xb$ is undirected or directed only away from $x$ and that there is another edge $xd$ incident to $x$ in $G[R]$. As $dxb$ is a good path and $xb$ is not directed towards $x$, we have $l(xd)=b$ and $xd$ is directed towards $x$. If $xd$ would be also directed away from $x$ then we could apply the previous case of part (iii) to $xd$ to conclude that $xb$ must be directed only towards $x$, a contradiction.
	
	So far we have seen that all edges incident to $x$ except for $xb$ are directed only towards $x$ and have label $b$. This implies that there must be at most one of these as $x\in R$ cannot have two incident edges with the same label. This finishes the proof of part (iii).	
\end{proof}

\begin{claim}\label{claim:grstructure}
	A connected component of $G[R]$ is either a cycle of length at least $4$ directed cyclically or a tree. If it is a tree then it contains at most two vertices, called the \emph{central vertices}, with the following properties:
	\begin{itemize}
		\item  If the component has one central vertex $x$ then $x$ has degree at most $3$ and every edge of the component incident to $x$ is directed only towards $x$.
		\item If the component has two central vertices then they are connected by an edge, called the \emph{central edge}, which is undirected or directed both ways and the rest of the edges of the component incident to the central vertices is directed only towards them.
		\item Every non-central vertex $y$ has degree at least one and at most two in the component: $y$ has one incident edge directed only away from $y$ and at most one more incident edge, which (if it exists) is directed only towards $y$.	
	\end{itemize}
\end{claim}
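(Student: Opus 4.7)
My plan is to classify each vertex $x$ of a connected component $C$ of $G[R]$ by the orientation pattern of its incident edges and then deduce the global topology of $C$ from a counting argument. By Lemma~\ref{lem:gr}(iii), at every $x\in R$ at most one edge of $G[R]$ incident to $x$ is not directed only toward $x$; call such an edge (if present) the \emph{distinguished edge} at $x$. Each $x\in V(C)$ is then of exactly one of three types: \emph{Type~I}, where there is no distinguished edge, so every incident edge is directed only toward $x$; \emph{Type~II}, where the distinguished edge is single-directed, hence only away from $x$; and \emph{Type~III}, where the distinguished edge is doubled (undirected or directed both ways). Type~I and Type~III vertices will play the roles of central vertices in the $1$-central and $2$-central cases respectively; Type~II vertices will be the non-central ones, and the remaining requirement on non-central vertices (one incident edge only away and at most one more only toward) is automatic from the definition of Type~II together with Lemma~\ref{lem:gr}(iii).

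A direct incidence count gives the key inequalities. Write $n=|V(C)|$, let $a$ be the number of single-directed edges of $C$ and $d$ the number of doubled ones. Each single-directed edge is ``only away'' from exactly one endpoint, necessarily of Type~II, so $n_{\mathrm{II}}=a$; each doubled edge has both endpoints of Type~III, so $n_{\mathrm{III}}=2d$. Together with $n_{\mathrm{I}}+n_{\mathrm{II}}+n_{\mathrm{III}}=n$ and $|E(C)|=a+d\ge n-1$ (by connectedness), this forces $n_{\mathrm{I}}\le 1-d$, and since $n_{\mathrm{I}}\ge 0$ only the three configurations $(d,n_{\mathrm{I}})\in\{(0,0),(0,1),(1,0)\}$ are possible.

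In configuration $(0,0)$ every vertex is Type~II and hence of degree at most $2$, and $|E(C)|=n$ forces all degrees to be exactly $2$, so $C$ is a cycle directed cyclically. To rule out triangles, I would apply Observation~\ref{obs:2path} inside a hypothetical directed $3$-cycle $a\to b\to c\to a$: the $2$-path through $b$ has differently labeled edges (since $b\in R$) and so must be good, yielding $l(ab)=c$ or $l(bc)=a$; in either case the corresponding edge is adjacent in $G$ to its label at its \emph{other} endpoint as well, making that edge directed both ways and contradicting $d=0$. In $(0,1)$ and $(1,0)$ we have $|E(C)|=n-1$, so $C$ is a tree. In $(0,1)$ the unique Type~I vertex $x$ satisfies $\deg(x)\le 3$ by Lemma~\ref{lem:x}(ii), matching the $1$-central description; in $(1,0)$ the two Type~III vertices are the endpoints of the unique doubled edge, so they are adjacent and that edge is the central edge, while all remaining vertices are Type~II of degree at most $2$, forcing $C$ to be a path with the central edge placed somewhere along it, as in the $2$-central description.

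The main obstacle is the length-$3$ exclusion: the counting argument alone permits a directed triangle, so ruling it out requires invoking the label-based goodness of $2$-paths via Observation~\ref{obs:2path}. Everything else follows routinely from the classification together with Lemmas~\ref{lem:x} and~\ref{lem:gr}.
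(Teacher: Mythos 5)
Your proposal is correct, and it reaches the claim by a genuinely different route from the paper. The paper argues by local propagation: it first excludes triangles, then walks around a cycle applying Lemma~\ref{lem:gr}(iii) edge by edge to force the cyclic orientation, and in the tree case it starts either from an undirected/doubly directed edge or from a ``sink'' vertex (whose existence it gets by following out-edges and using acyclicity) and grows the at most two or three outward-directed paths explicitly. You instead run a global double count: Lemma~\ref{lem:gr}(iii) gives each vertex of $R$ at most one distinguished edge, the resulting three vertex types satisfy $n_{\mathrm{II}}=a$ and $n_{\mathrm{III}}=2d$ (the latter because no vertex can lie on two doubled edges), and $|E(C)|\ge n-1$ then forces $n_{\mathrm{I}}+d\le 1$, which pins down the three configurations at once. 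Your approach buys the ``at most one central vertex or one central edge per tree component'' statement essentially for free, and it even shows that a tree component has \emph{exactly} one such central object; the paper's propagation obtains this only implicitly by showing the grown paths exhaust the component, but in exchange it records the extra labeling fact $l(c_ic_{i+1})=c_{i-1}$ along cycles, which your count does not produce (and which the claim does not require). Both arguments need the same non-counting ingredient, the exclusion of directed triangles via Observation~\ref{obs:2path}; your version closes that case by contradicting $d=0$ directly, a slight simplification of the paper's detour through Lemma~\ref{lem:gr}(ii). One small citation nit: for the degree bound on the Type~I vertex you should invoke Lemma~\ref{lem:x}(i) together with the fact that a vertex of $R$ has all incident labels distinct, rather than part (ii) alone, but this is exactly how the paper uses Lemma~\ref{lem:x} as well.
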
  

\begin{figure}
	\begin{center}
		\includegraphics[width=16cm]{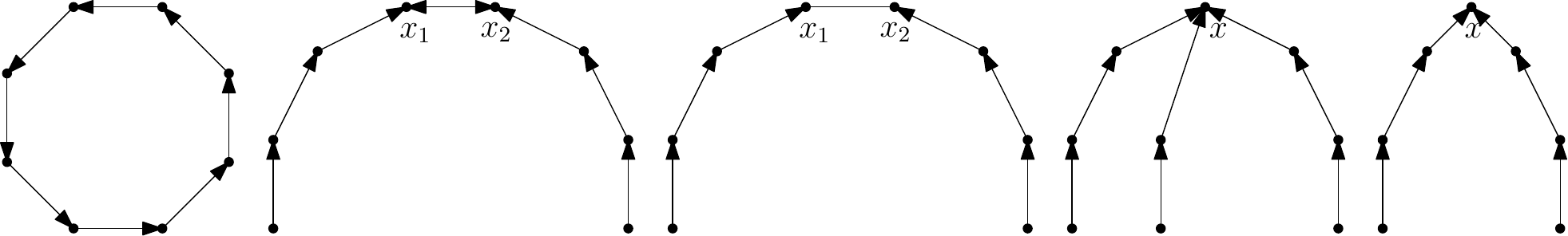}
		\caption{The possible structures of a component of $G[R]$ and its central vertices.}
		\label{fig:grstructure}
	\end{center}
\end{figure}

\begin{proof}
See Figure \ref{fig:grstructure} for how the components can look like.

Assume first that the component contains a cycle.

First we claim that there is no cycle of length $3$ (i.e., a triangle) in $G[R]$. Indeed, in such a cycle on vertices $a,b,c$ using that the path $abc$ is good we would get that one of the edges $ab$ and $bc$ is labeled with the third vertex and thus directed both ways and then by Lemma \ref{lem:gr}(ii) the third vertex is not in $R$, a contradiction.

Now let $C=c_0c_1\dots c_{k-1}$ be a cycle of length at least $4$ in $G[R]$. As any two adjacent edge in $G[R]$ forms a good path, $C$ must have an edge directed in some way, wlog. $c_0c_1$ is directed towards $c_0$. Then by Lemma \ref{lem:gr}(iii) $c_1c_2$ is directed only towards $c_1$ and $c_1$ has no other edge incident to it in $G[R]$. Continuing this argument with the edge $c_1c_2$ etc., we go around the cycle and see that all vertices have degree exactly two in $G[R]$ and they are directed only in one way such that together they form a cyclically directed cycle (in the last step we see that $c_0c_1$ cannot be directed both ways). Note that Lemma \ref{lem:gr}(iii) also implies that $l(c_ic_{i+1})=c_{i-1}$ for every $i$ where indices are modulo $k$.

Assume now that the component is a tree with at least one edge.

First, if there exists an edge $ab$ which is undirected or directed both ways in the component, then similar to the case of the cycle, we can apply Lemma \ref{lem:gr}(iii) to $ab$ to see that there can be at most one more edge $a_1a$ incident to $a$ and this is directed only towards $a$. Then we can apply Lemma \ref{lem:gr}(iii) repeatedly to the edge $a_1a$ to see that there is at most one more edge $a_2a_1$ incident to $a_1$ and this is directed only towards $a_1$, etc. to find a path of edges with $a$ being one of its endvertices in which all edges are directed towards their endvertex closer to $a$. The same way we can find another path of edges with $b$ being one of its endvertices in which all edges are directed towards their endvertex closer to $b$. The two paths together with $ab$ form the whole component, having the required properties with central vertices $a$ and $b$.

Second, if there is no undirected edge or an edge directed both ways then take a vertex $x$ for which every edge incident to $x$ in $G[R]$ is directed only towards $x$ (if there would be no such $x$ then starting from an arbitrary vertex we could find a directed cycle in this component contradicting that it is a tree). By Lemma \ref{lem:x} $x$ has degree at most $3$. We can again repeatedly apply Lemma \ref{lem:gr}(iii) for each incident edge to find at most three paths with one endvertex of them being $x$ such that every edge of every path is directed towards its vertex closer to $x$. The union of these at most $3$ paths have the required properties with central vertex $x$.
\end{proof}

Let us denote by $R_c$ the vertices of $R$ that are in a cycle-component in $G[R]$, and by $R_t$ the vertices that are in a path-component of $R$, note that $R=R_t\cup R_c$.

\smallskip
Now we proceed by understanding the structure of $K=V\setminus R$. We need to define an auxiliary directed graph $D$. We get the vertex set of $D$ from $V$ by contracting every $K(q)$ to a vertex $k(q)$ (plus we have the vertices of $R$) and we put an edge in $D$ directed from $k(q)$ to $q$ if $q\in R$ or directed from $k(q)$ to $k(r)$ if $q\in K(r)$. In this graph every vertex $k(q)$ has outdegree $1$ while vertices of $R$ have outdegree $0$. We note that there may be cycles of length $2$. This definition immediately implies the following:

\begin{claim}
	A connected component of $D$ is either a reverse rooted tree\footnote{A reverse rooted tree is a tree with one of its vertices being its root and in which every edge is directed towards this root vertex.} with its sink being its only vertex in $R$ or it contains exactly one cycle and is disjoint from $R$.
\end{claim}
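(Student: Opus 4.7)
The plan is to exploit the rigid out-degree structure of $D$: every vertex of the form $k(q)$ has outdegree exactly $1$, every vertex of $R$ has outdegree $0$, and there are no other vertices. Let $C$ be a (weakly) connected component of $D$, write $m$ for the number of vertices of the form $k(\cdot)$ in $C$ and $r$ for the number of vertices of $R$ in $C$. Counting out-edges by their tails, $C$ has exactly $m$ edges (counted with multiplicity, since the preceding remark notes that antiparallel pairs can occur). Since $C$ is connected on $m+r$ vertices, its edge count is at least $m+r-1$. Comparing the two gives the key inequality $r\le 1$, from which the two alternatives of the claim fall out.

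If $r=1$, then $|E(C)|=|V(C)|-1$, so $C$ is a tree (and in particular has no parallel edges in $D$). Starting from any $k(q)$ in $C$ and repeatedly following the unique outgoing edge produces a directed walk that cannot revisit a vertex, as that would yield an undirected cycle in the tree $C$. The walk must therefore terminate at a vertex of outdegree $0$, which can only be the unique $R$-vertex $v$ of $C$. Hence the outgoing edge of each $k(q)$ points along the unique undirected $k(q)$--$v$ path in $C$, so every edge of $C$ is oriented towards $v$, making $C$ a reverse rooted tree with sink $v$.

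If $r=0$, then $|E(C)|=|V(C)|$, and a connected multigraph with as many edges as vertices contains exactly one cycle. A pair of antiparallel edges between two $k(\cdot)$-vertices is counted here as a cycle of length two, which matches the remark preceding the claim. Moreover $C$ is disjoint from $R$ by the assumption $r=0$, giving the second alternative.

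I do not anticipate any real obstacle beyond being careful about edge multiplicities when applying the basic tree/unicyclic edge-count identities; the whole argument is essentially the standard structural decomposition of a graph in which every vertex has out-degree at most one, specialized to the two kinds of vertices of $D$.
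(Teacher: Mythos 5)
Your proof is correct, and it follows the same underlying idea the paper relies on: the paper offers no explicit argument, stating only that the claim follows immediately from the definition of $D$ (each $k(q)$ has outdegree exactly $1$ and each vertex of $R$ has outdegree $0$), which is precisely the out-degree structure you formalize via the edge count $|E(C)|=m\ge |V(C)|-1$ and the resulting dichotomy $r\le 1$. Your handling of the degenerate cases (isolated $R$-vertices, antiparallel pairs forming $2$-cycles) is consistent with the paper's remark that $D$ may contain cycles of length two, so nothing further is needed.
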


We define a partition of $V\setminus R$. If a vertex $v$ of $G$ is in some $K(q)$ such that the component of $k(q)$ in $D$ has a cycle then we put $v$ into $K_c$, otherwise, if the component of $k(q)$ in $D$ is a tree, we put $v$ into $K_t$.

Thus, assuming that $G$ has no parallel edges, we have partitioned $V$ into $4$ parts, $K_c,K_t,R_c$ and $R_t$. We need an additional lemma about the structure of $G$ before we can start the coloring process.

\begin{lemma}\label{lem:nq}
	Given an edge $qx$ with label $r$ such that $q,r\in K$, $x\in R$. Let $X$ be the component of $G[R]$ containing $x$. Then either $X$ is a directed path whose sink and (the only) central vertex is $x$ or $X$ is a single edge $e$ with label $q$ directed in both ways such that $e$ is a central edge with central vertices $x,y$ such that $qy$ is an edge with label $r$.
\end{lemma}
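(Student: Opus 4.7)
The plan is to apply Claim \ref{claim:grstructure} to the component $X$ of $G[R]$ containing $x$ and, using the additional edge $qx$ together with Lemma \ref{lem:x}, rule out every possibility except the two alternatives in the conclusion. Throughout, the key driver is that since $x\in R$, all edges incident to $x$ in $G$ carry pairwise distinct labels; since $qx$ already uses label $r$, any extra edge at $x$ in $X$ must have a label distinct from $r$ and from the labels of the other edges at $x$.

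I will first eliminate the cycle case. If $X$ were a cyclically directed cycle of length at least four and $y$ any neighbor of $x$ on it, then Claim \ref{claim:grstructure} gives $l(xy)\in X\subseteq R$. The 2-path $(q,x,y)$ then has edges with distinct labels $r$ and $l(xy)$, so goodness would force either $r=y$ (impossible as $r\in K$ and $y\in R$) or $l(xy)=q$ (impossible as $l(xy)\in R$ and $q\in K$). Second, I will show that in the tree case $x$ must be a central vertex. Otherwise, by Claim \ref{claim:grstructure} there is an edge $xy$ in $X$ directed only away from $x$, meaning $x$ is not adjacent to $l(xy)$; but the same 2-path argument on $(q,x,y)$ again forces $l(xy)=q$, contradicting the existence of the edge $qx$.

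So $X$ is a tree and $x$ is central. If $x$ is the only central vertex, I will prove $x$ has degree at most one in $X$: if $x$ had two neighbors $v_1,v_2$ in $X$, then $x$ would carry three incident edges $qx,xv_1,xv_2$ in $G$ with three pairwise distinct labels, and Lemma \ref{lem:x}(ii) would force these labels to cyclically permute the endpoints $\{q,v_1,v_2\}$. This is impossible: the label $r$ of $qx$ must differ from the endpoint $q$, so $r$ would have to lie in $\{v_1,v_2\}\subseteq R$, contradicting $r\in K$. Combining this with the tree structure given by Claim \ref{claim:grstructure} identifies $X$ as a directed path with sink $x$. If instead $X$ has two central vertices $x,y$ joined by a central edge $xy$, the 2-path $(q,x,y)$ again forces $l(xy)=q$; the central edge cannot be undirected (otherwise $x$ would not be adjacent to $q=l(xy)$, contradicting the edge $qx$), so it is directed both ways, and then Lemma \ref{lem:gr}(ii) supplies the edge $qy$. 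To pin down $l(qy)=r$ I will apply goodness to the 2-path $(x,q,y)$: the only other alternative $l(qy)=x$ would produce two hyperedges on the 3-set $\{x,y,q\}$, forbidden by the initial reduction that each 3-subset supports at most one hyperedge of $\HH$. Finally, any additional edge $xz$ at $x$ (or symmetrically at $y$) in $X$ would again trigger the cyclic permutation of Lemma \ref{lem:x}(ii), forcing $r$ into $\{y,z\}\subseteq R$, a contradiction.

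The main obstacle is the two-central-vertex subcase, where several pieces must be verified simultaneously: $l(xy)=q$, the central edge is bidirected rather than undirected, $qy$ is an edge with label $r$, and neither $x$ nor $y$ bears any further edge in $X$. Each piece rests on a careful choice of 2-path and on the rigid cyclic labeling from Lemma \ref{lem:x}(ii), and the identification $l(qy)=r$ crucially uses the initial reduction to exclude $l(qy)=x$.
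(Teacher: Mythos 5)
Your proof is correct and follows essentially the same route as the paper's: the heart of both arguments is that goodness of the 2-path $(q,x,y)$ together with $r\in K$, $y\in R$ forces $l(xy)=q$, so $x\in R$ can have at most one neighbour in $G[R]$ (with that edge directed towards $x$), after which the classification in Claim \ref{claim:grstructure} and Lemma \ref{lem:gr}(ii) finish the two-central-vertex case exactly as in the paper. The only loose citation is your claim that $l(xy)\in R$ for a cycle edge ``by Claim \ref{claim:grstructure}'' --- that fact appears in the proof of that claim rather than its statement --- but this is harmless, since your Lemma \ref{lem:x}(ii) degree argument disposes of the cycle case directly anyway.
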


\begin{proof}
	See Figure \ref{fig:coloring}, where the vertices in the grayed area can play the role of $x$.
	
	If the component of $x$ in $G[R]$ is an isolated vertex then we are done. Otherwise, take an edge $xy$ incident to $x$ in $G[R]$. If $l(xy)\ne q$ then as $r\notin R$, we have $l(qx)=r\ne y$ and so $qxb$ is a bad path, a contradiction. Thus $l(xy)=q$. As $x\in R$, it cannot have multiple incident edges with the same label, yet all incident edges have label $q$, therefore it has exactly one incident edge in $G[R]$ which has label $q$ and is directed towards $x$. Thus either $X$ is a directed path ending in $x$ and we are done or the edge $xy$ is a central edge directed in both ways. Then $y$ must be incident to the edge $yq$ and the assumptions of the lemma hold for $y$ (instead of $x$) as well. Thus, from the first part of the proof it follows that the only edge incident to $y$ in $G[R]$ is $xy$, finishing the proof.
\end{proof}

We are ready to define the $2$-coloring of $V$ which will be shown to be a proper $2$-coloring of $\HH$, see Figure \ref{fig:coloring} for an example.

\bigskip

\begin{figure}
	\begin{center}
		\includegraphics[width=16cm]{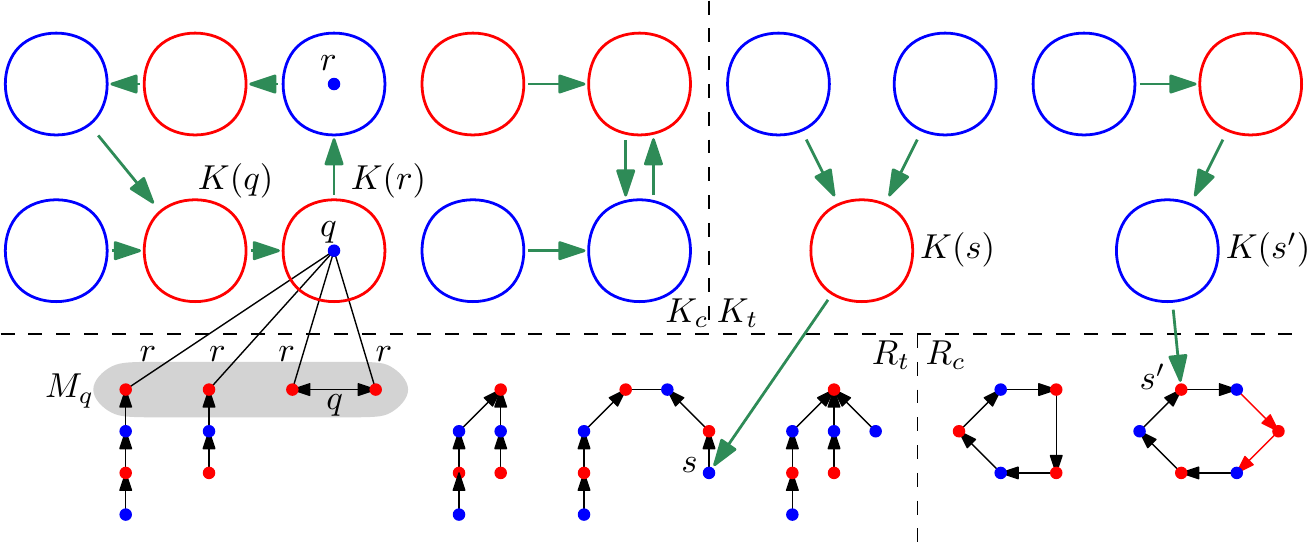}
		\caption{The structure of $G$ and the coloring the process gives. Edges with big arrows correspond to edges of $D$.}
		\label{fig:coloring}
	\end{center}
\end{figure}

\noindent{\bf The coloring process when there are no parallel edges} 

\begin{itemize}

\item[] {\bf Phase 1. Coloring the vertices of $K_c$.}

Recall that each component of $D$ in $K_c$ has exactly one cycle. If this cycle is even, that is, a component $D_1$ is a bipartite graph then we properly $2$-color it and then the vertices of $G$ contracted to some vertex of $D$ inherit the color of this vertex and we are done.
Otherwise, if the unique cycle is odd in a component $D_1$, then we select one edge $e_1$ of this cycle. Let $e_1$ be directed from $k(q)$ to $k(r)$. We temporarily delete $e_1$ from $D_1$, properly $2$-color the remaining tree graph $D_1'$ such that $k(q)$ and $k(r)$ are both colored red and then the vertices of $G$ contracted to some vertex of $D$ inherit the color of this vertex. Then we change the color of $q\in K(r)$ from red to blue. We call such a vertex $q$ as a \emph{rebel} vertex (note that there is at most one rebel vertex in each component of $D$). Let $M_q$, the set of \emph{minions} of the rebel vertex $q$, be the set of those vertices of $R$ which are connected to $q$ by an edge with label $r$ in $G$ (this $M_q$ may be empty). Lemma \ref{lem:nq} implies that the component $X$ of a minion vertex $x\in M_q$ in $G[R]$ is either a directed path ending in $x$ or it is a single edge $xy$ with $x,y\in M_q$, directed in both ways (see Figure \ref{fig:coloring}).

\item[] {\bf Phase 2. Coloring the vertices of $R_c$.}

We color the vertices of each cycle of $R_c$ such that the colors of the vertices on the cycle alternate cyclically except for at most one pair of consecutive vertices (this happens on cycles of odd size).
	
\item[] {\bf Phase 3. Coloring the vertices of $R_t$.}

First we color red all the central vertices in $R_t$ that are minions of some rebel vertex. Notice that by Lemma \ref{lem:nq} in each component of $R_t$ at most one vertex is colored red except when the component is a single central edge in which case both vertices may be colored red. Next we color the rest of the vertices of each tree component so that every edge of $G[R_t]$ is non-monochromatic except for those central edges which connect central vertices that are both minions of some rebel vertex (the same rebel vertex, by Lemma \ref{lem:nq}) and thus were already colored red.

\item[] {\bf Phase 4. Coloring the vertices of $K_t$.} 

Recall that every component of $D$ which has no cycle is a rooted tree with its sink being the only vertex in $R$. In every such component the root is already colored (in Phase 2 or 3) but the vertices contracted to the rest of the vertices are so far uncolored. Let us properly $2$-color this tree in $D$ starting from its root and then in $G$ the vertex inherits the color of the vertex of $D$ it was contracted to.

\item[] This finishes the coloring process.
\end{itemize}

\begin{claim}\label{claim:nopara2col}
	If $G$ has no parallel edges, the above defined coloring is a proper $2$-coloring of $\HH$.
\end{claim}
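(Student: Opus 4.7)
My plan is to verify, for every hyperedge $ab \to c$ of $\HH$, that the triple $\{a, b, c\}$ is bichromatic in the coloring produced by Phases 1--4. I would split the argument by where the edge $ab$ of $G$ sits in the partition $V = K_c \cup K_t \cup R_c \cup R_t$, and in each case either point out that the edge $ab$ is already bichromatic in $G$, or show that the label vertex $c$ has the opposite color to both endpoints.

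For edges of $G[R]$, Phases 2 and 3 make almost every edge bichromatic, so I only check the distinguished monochromatic exceptions. On an odd cycle of $G[R_c]$ the unique monochromatic edge $c_i c_{i+1}$ has label $c_{i-1}$ (using the identity $l(c_j c_{j+1}) = c_{j-1}$ noted in the proof of Claim \ref{claim:grstructure}), and the near-alternating pattern of Phase 2 gives $c_{i-1}$ the opposite color. The monochromatic exceptions in $G[R_t]$ are, by Phase 3, exactly the central edges $xy$ joining two minions of a common rebel $q$; Lemma \ref{lem:nq} says such an edge is labeled by $q$ itself, and since minions are coloured red while $q$ was flipped to blue in Phase 1, the hyperedge $xy \to q$ is bichromatic.

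For edges $ab$ with at least one endpoint in $K$, say $a \in K(r^*)$, I apply Lemma \ref{lem:x}(iii): the edges incident to $a$ all have label $r^*$, except possibly a single extra edge whose label $c' \neq r^*$ has its other endpoint equal to the vertex $r^*$. Thus the label $c$ of $ab$ is either $r^*$ (generic case), or $c \neq r^*$ and $b = r^*$ (exceptional case). In both situations the triple contains $\{a, r^*\}$, so it suffices in most cases to show $a$ and $r^*$ receive different colours. By construction, $a$ inherits the colour of $k(r^*)$ in $D$, while $r^*$ is mapped in $D$ to a neighbour of $k(r^*)$, namely $r^*$ itself if $r^* \in R$, or $k(s)$ if $r^* \in K(s)$. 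For tree $D$-components (Phase 4) and for even-cycle $D$-components (Phase 1), the proper $2$-coloring of $D$ forces these two adjacent $D$-vertices to differ, so $a$ and $r^*$ differ and the hyperedge is bichromatic.

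The main obstacle is the odd-cycle case of Phase 1, where the deleted edge $e_1 = k(q) \to k(r)$ forces $k(q)$ and $k(r)$ both to be red and the rebel $q$ gets flipped to blue. Here I would treat three subsituations in turn. First, if $r^* \notin \{q, r\}$, then $k(r^*)$ is not incident to $e_1$ in $D$, so $k(r^*)$ and the image of $r^*$ remain properly $2$-coloured in $D_1'$ and the generic argument applies. Second, if $r^* = r$, then the image of $r$ is the next cycle vertex past $k(r)$ in $D_1'$, which is blue; if additionally $a \neq q$ then $a$ is red and we are done, while if $a = q$ then $a$ is blue, and I use Observation \ref{obs:cq} together with Lemma \ref{lem:x}(iii) to conclude that the third vertex $b$ of an edge labeled $r$ at $q$ must lie in $K(r) \cup K(q) \cup M_q$, each of which is coloured red, yielding the pattern $\{\mathrm{blue}, \mathrm{red}, \mathrm{blue}\}$. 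Third, if $r^* = q$, then $a \in K(q)$ is red and the label vertex $r^* = q$ is blue, so the hyperedge is bichromatic regardless of $b$. Finally, the exceptional case $c \neq r^*$ and $b = r^*$ is dispatched by the same analysis of the $D$-edge $k(r^*) \to \mathrm{image}(r^*)$, the rebel flip again separating the colours of $a$ and $r^*$ if that edge happened to coincide with $e_1$. Combining these verifications over all placements of $ab$ yields the claim.
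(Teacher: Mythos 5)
Your overall strategy mirrors the paper's: color by the partition $K_c,K_t,R_c,R_t$, handle the $G[R]$ exceptions via \clref{grstructure} and \lref{nq}, and reduce edges meeting $K$ to the adjacency of $k(r^*)$ and the image of $r^*$ in $D$. Most of it is sound, but your treatment of the ``exceptional case'' ($a\in K(r^*)$, label $c'\neq r^*$, hence $b=r^*$) has a genuine gap. You claim that the only danger is the $D$-edge out of $k(r^*)$ coinciding with $e_1$, in which case the rebel flip of $r^*=q$ restores the color separation between $a$ and $r^*$. You overlook the other way a rebel can interfere: $a$ itself may be the rebel, i.e.\ $a=q$ and $r^*=r$. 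Then $a$ is flipped to blue, while $r\in K(s)$ inherits the color of $k(s)$, the successor of the red vertex $k(r)$ on the cycle, which is also blue. So $a$ and $r^*$ receive the \emph{same} color and your stated conclusion is false. Concretely, take the hyperedges $qx_1\rightarrow r$, $qx_2\rightarrow r$, $rq\rightarrow s$, $ry_1\rightarrow s$, $sz_1\rightarrow q$, $sz_2\rightarrow q$: this satisfies Property S, $D$ has the odd cycle $k(q)\to k(r)\to k(s)\to k(q)$, and choosing $e_1=k(q)\to k(r)$ makes $q$ and $r$ both blue, so the edge $qr$ with label $s$ is monochromatic even though the hyperedge $qr\rightarrow s$ is fine (because $s$ is red).

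The coloring is still proper in this configuration, but for a reason your argument never supplies: when $a=q$ is the rebel, \lref{x}(iii) applied at $b=r^*=r\in K(s)$ forces the label $c'$ to be either $s$ (so the edge is really a ``label matches the core of the \emph{other} endpoint'' edge, and $r$ gets a color different from the head $s$ because there is no rebel in $K(s)$) or else forces $q=s$, producing a $2$-cycle in $D$, which is even and therefore rebel-free, contradicting $a=q$ being a rebel. This is exactly how the paper's proof organizes the case analysis: it classifies edges meeting $K$ by whether the label agrees with the core of \emph{some} endpoint, rather than fixing one endpoint $a\in K(r^*)$ up front, and that choice is what lets it avoid the trap you fell into. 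To repair your write-up you need to add this sub-case explicitly and argue about the color of the head vertex $c'=s$, not about the colors of $a$ and $r^*$.
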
	
\begin{proof}	
	
We need to check that for every edge of $G$ the corresponding hyperedge is properly colored. First we consider the edges of $G[R]$.

Most edges of the cycles of $R_c$ were properly colored, and so the corresponding hyperedges are also non-monochromatic. The only exceptions are singular edges in each odd cycle, but as the cycles are cyclically directed, the head-vertex of the corresponding hyperedge must be the next vertex on the cycle, which is colored differently from the endvertices of the vertex, and so the corresponding hyperedge is non-monochromatic.

Most edges of $R_t$ were properly colored, and so the corresponding hyperedges are also non-monochromatic. The only exceptions are those central edges that connect minion vertices. In this case by Lemma \ref{lem:nq} the label of the edge is the rebel vertex $q$ corresponding to its endvertices. Notice that $q$ got color blue, thus the hyperedge corresponding to this central edge is non-monochromatic (its tail is red and its head is blue).

We are left to consider edges incident to some vertex of $K$.

First, notice that in a $K(r)$ all but at most one of the vertices get a color different from the color of $r$. Therefore, if an edge lies inside some $K(r)$ then by Observation \ref{obs:2path} it must have label $r$ and so the corresponding hyperedge is not monochromatic (its head-vertex is differently colored from at least one of its tail-vertices). 

Further, for the same reason the edges with label $r$ incident to $K(r)$ are also corresponding to a non-monochromatic hyperedge with the only exception maybe when the edge $e$ is incident to a rebel vertex $q\in K(r)$. In this case, if the other endvertex of $e$ is in $R$ then it is in $M_q$ by definition and, recalling that all vertices of $M_q$ are colored red while $q$ is colored blue, this edge is properly colored and so the corresponding hyperedge as well. In this case $q$ is  a rebel tamed by its minions. Second, if the other endvertex is in some $K(s)$ then by Lemma \ref{lem:x}(iii) $s=q$ must be the case, but then in Phase 1 this endvertex, being in $K(q)$, is colored red while $q$ is colored blue, so the edge is properly colored and the corresponding hyperedge is also non-monochromatic. In this case $q$ is a rebel without a cause. 

We are left to consider edges incident to a vertex in $K(r)$ for some $r$ that  have a label different from $r$. There are two types of such edges. First, consider an edge with label $a\ne r$ connecting a vertex in $K(r)$ with a vertex in $R$. Notice that by Lemma \ref{lem:x} in this case the vertex in $R$ must be $r$ and so by the definition of the coloring in Phase 4 the two endvertices got a different color, and so the corresponding hyperedge is also non-monochromatic. Second, consider an edge with label $a\notin \{q,r\}$ which connects vertices in $K(r)$ and $K(q)$. In this case the two endvertices must be $q\in K(r)$ and $r\in K(q)$, corresponding to a $2$-cycle in $D$, thus the edge is properly colored in Phase 1, and so the corresponding hyperedge is also non-monochromatic.
\end{proof}

This finishes the proof in case $G$ has no parallel edges. Note that this already implies the following weakening of Theorem \ref{thm:3unif}: 

\begin{theorem}\label{thm:3unifweak}
Given a $2\rightarrow 1$ hypergraph $\HH$ s.t. for every pair of hyperedges $H_1,H_2\in \HH$, if $H_1\cap H_2\ne \emptyset$ then $H_1\cap H_2$ contains the head-vertex of at least one of these two hyperedges. Then $\HH$ admits a proper $2$-coloring.
\end{theorem}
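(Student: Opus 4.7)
The plan is to deduce Theorem \ref{thm:3unifweak} directly from the no-parallel-edges case of Theorem \ref{thm:3unif} just established. The key observation is that, for $2\rightarrow 1$ hypergraphs, the hypothesis of Theorem \ref{thm:3unifweak} is equivalent to Property S together with the requirement that the associated graph $G$ has no parallel edges.

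First I would verify this equivalence. Two parallel edges of $G$ on a vertex pair $\{a,b\}$ correspond to distinct hyperedges $H_1 = ab \to c$ and $H_2 = ab \to d$ with $c \ne d$; then $H_1 \cap H_2 = \{a,b\}$ and neither head-vertex lies in the intersection, violating the hypothesis of Theorem \ref{thm:3unifweak}. Hence $G$ must be simple. Moreover, the specialization of the hypothesis to pairs with $|H_1 \cap H_2|=1$ is literally Property S, so $\HH$ satisfies Property S. (Conversely, though not strictly needed for this deduction, a short case analysis on pairs with $|H_1 \cap H_2|=2$ shows that Property S plus absence of parallel edges already forces a head to lie in every non-empty intersection, so the two formulations really coincide.)

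Second, I would invoke the four-phase coloring process defined above together with Claim \ref{claim:nopara2col}, which produce a proper $2$-coloring for every $2\rightarrow 1$ hypergraph satisfying Property S whose associated graph is simple. Since $\HH$ falls into exactly this class by the first step, the coloring process outputs a proper $2$-coloring of $\HH$, finishing the proof.

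I do not anticipate a genuine obstacle here: all the substantive structural work -- the core decomposition, the auxiliary graph $D$, Lemma \ref{lem:nq}, and the verification in Claim \ref{claim:nopara2col} -- has already been completed. The only addition is the one-paragraph observation that the strengthened intersection hypothesis is precisely the conjunction of Property S and the simplicity of $G$. The mildest care is needed in the parallel-edge direction, where one checks that a pair of hyperedges sharing their two tail-vertices is the unique way the hypothesis can fail at intersection-size $2$; this is just a short case check on where the head-vertex of each hyperedge lies relative to the common pair.
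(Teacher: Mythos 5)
Your proposal is correct and matches the paper's own (very brief) justification: the paper derives Theorem \ref{thm:3unifweak} as an immediate consequence of the no-parallel-edges case of the coloring argument, exactly as you do, and your added observation that the strengthened intersection hypothesis is equivalent to Property S plus simplicity of $G$ is the right (and correctly verified) link. No issues.
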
	

Next we prove Theorem \ref{thm:3unif} in full generality and thus from now on $G$ is allowed to have parallel edges. A pair of vertices $\{a,b\}$ is an \emph{overloaded pair} if there are parallel edges in $G$ between them. Notice that the edges on the same overloaded pair have different labels otherwise there would be multiple hyperedges in $\HH$ on the same vertex set.

\begin{figure}
	\begin{center}
		\includegraphics[width=12cm]{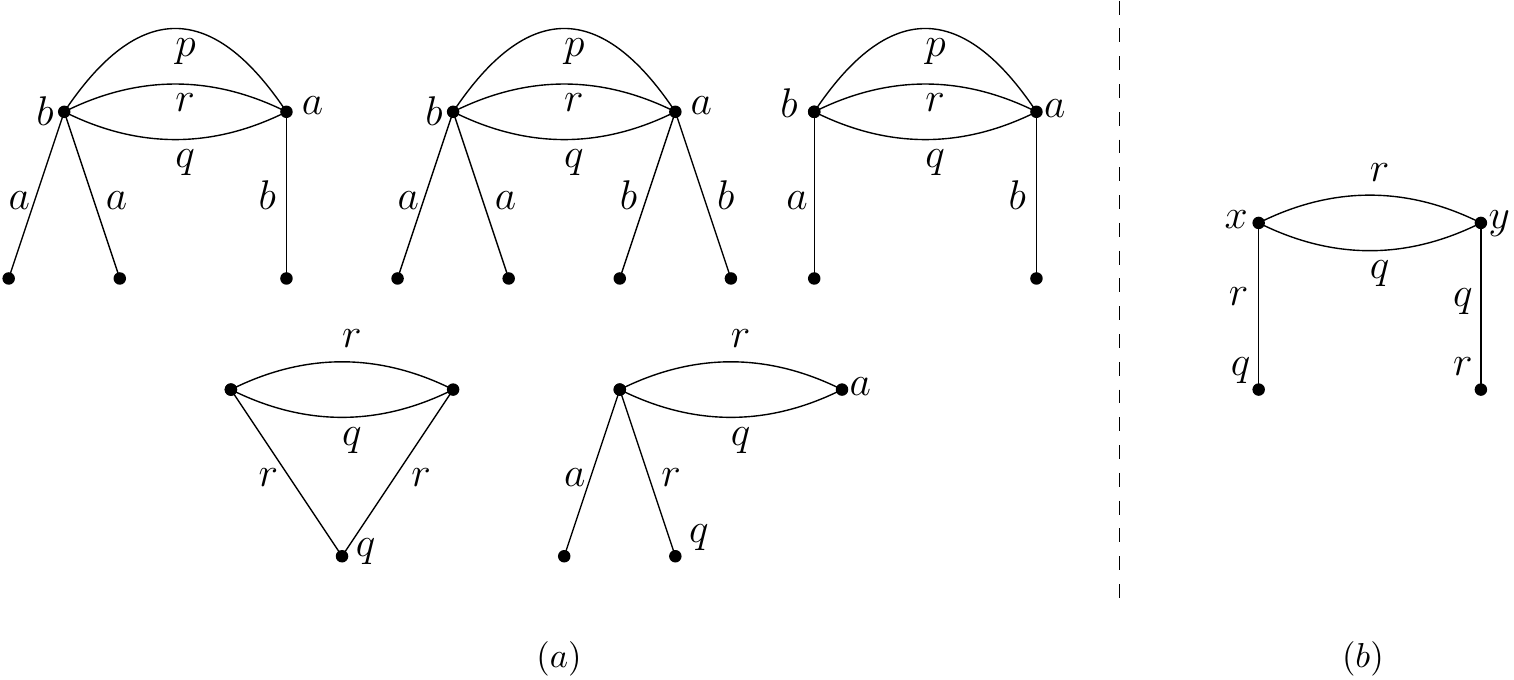}
		\caption{(a) Examples of weak overloaded pairs and (b) a strong overloaded pair.}
		\label{fig:overloaded}
	\end{center}
\end{figure}

There are two types of overloaded pairs. See Figure \ref{fig:overloaded}. An overloaded pair $\{x,y\}$ is {\em weak} if there exists an edge on this pair whose label is different from the label of every other edge incident to $x$ and $y$, in which case we choose one such edge and call it the \emph{representative edge} of the pair. An overloaded pair which is not weak is called {\em strong}.  

\begin{lemma}\label{lem:no2overloaded}
	No two overloaded pairs share a vertex.
\end{lemma}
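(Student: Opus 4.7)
My approach is a short proof by contradiction exploiting Observation~\oref{2path} together with the standing assumption that each $3$-subset of $V$ hosts at most one hyperedge of $\HH$. Suppose for contradiction that two overloaded pairs share a vertex; then they can be written as $\{x,a\}$ and $\{x,b\}$ with $a\ne b$. Let $L_a$ be the set of labels appearing on the (at least two) parallel edges between $x$ and $a$, and define $L_b$ analogously. The elements of $L_a$ are pairwise distinct and avoid $\{x,a\}$ (a label never equals its own endpoint, and otherwise two hyperedges of $\HH$ would share a vertex set), and similarly for $L_b$; in particular $|L_a|,|L_b|\ge 2$.

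The first substantive step is to pair each edge between $x$ and $a$ with each edge between $x$ and $b$, obtaining a 2-path on $(a,x,b)$. Observation~\oref{2path} then yields the following cross-constraint: for every $(l,m)\in L_a\times L_b$ with $l\ne m$, either $l=b$ or $m=a$.

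The second step is a tiny combinatorial chase exploiting $|L_a|,|L_b|\ge 2$. Some $l^*\in L_a$ satisfies $l^*\ne b$ (at most one element of $L_a$ can equal $b$), and symmetrically some $m^*\in L_b$ satisfies $m^*\ne a$. Applied to $(l^*,m^*)$, the cross-constraint forces $l^*=m^*$, so $l^*$ lies in both $L_a$ and $L_b$. Now pick any second element $l'\in L_a\setminus\{l^*\}$: the cross-constraint applied to $(l',m^*)=(l',l^*)$ together with $l^*\ne a$ (since $l^*\in L_a$) yields $l'=b$, so $b\in L_a$. The symmetric argument gives $a\in L_b$.

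The conclusion is then immediate: the edge between $x$ and $a$ with label $b$ and the edge between $x$ and $b$ with label $a$ correspond to the two hyperedges $xa\rightarrow b$ and $xb\rightarrow a$ of $\HH$, both supported on the $3$-set $\{x,a,b\}$ but with different head-vertices. This contradicts the normalization made at the start of this section, proving the lemma. I do not anticipate any real obstacle; the only care required is to keep track of why the labels $l^*$ and $l'$ lie outside the excluded sets $\{x,a\}$, $\{x,b\}$, so that the chase in step three genuinely produces a second hyperedge on the triple $\{x,a,b\}$.
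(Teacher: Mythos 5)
Your proof is correct and takes essentially the same route as the paper's: both apply \oref{2path} to the 2-paths through the shared vertex and conclude by forcing two distinct hyperedges on the triple $\{x,a,b\}$, contradicting the normalization that each $3$-set carries at most one hyperedge. Your uniform label chase merely replaces the paper's explicit three-way case split on how the two label sets overlap.
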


\begin{proof}
	Assume on the contrary that $\{x,y\}$ and $\{y,z\}$ are both overloaded pairs. Assume there are edges with labels $r,q$ between $x,y$ witnessing that it is an overloaed pair. If two edges between $y,z$ have labels $r,q$ or labels completely different from $r,q$ then there is a path which is bad, a contradiction. The only remaining case wlog. is if the edges between $y,z$ have labels $r,s$ with $s\ne q$. Then we must have $s=x$ and $q=z$ to make all paths good. However, in this case there are two hyperedges on these three vertices, a contradiction.
\end{proof}

\begin{lemma}\label{lem:strong}
	If an overloaded pair $\{x,y\}$ is strong then there are exactly two parallel edges on this pair, with labels $r,q$ such that $x$ has an additional edge with label $r$ and $y$ has an additional edge with label $q$. Besides these, there are no other edges in $G$ incident to $x$ or $y$.
\end{lemma}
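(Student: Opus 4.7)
The plan is to combine Observation~\ref{obs:2path} (the good-path condition) with the standing assumption that every $3$-subset of vertices is the vertex set of at most one hyperedge of $\HH$. Let $r$ and $q$ be labels of two parallel edges on $\{x,y\}$; then $r\neq q$ and $r,q\notin\{x,y\}$ since no edge has the label of one of its endvertices.

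First I would bound the multiplicity at $\{x,y\}$. Suppose toward contradiction that a third parallel edge of label $s\notin\{r,q\}$ exists. Strongness would then supply an edge $xc$ (or $yc$) labeled $s$ with $c\neq y$. Pairing $xc$ as a 2-path with each of the two parallel edges and applying Observation~\ref{obs:2path} forces $c=r$ (via the $r$-labeled parallel edge, using $s\neq y$) and $c=q$ (via the $q$-labeled parallel edge) simultaneously, a contradiction. The same argument works at $y$, so no such additional edge exists; the $s$-labeled parallel edge is then a weakness certificate, contradicting strongness. Hence exactly two parallel edges remain.

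Next I would locate the additional incident edges that strongness requires. An additional edge at $x$ with label $r$, written $xc$, must form a good 2-path with the parallel edge of label $q$, which forces $c=q$; similarly any additional $r$-edge at $y$ must be $yq$, and any additional $q$-edge is $xr$ or $yr$. If both additional edges were incident to the same vertex, say $x$, then the hyperedges $xq\to r$ and $xr\to q$ would share the $3$-set $\{x,q,r\}$, contradicting the uniqueness assumption; the same collision eliminates ``both at $y$''. So, after possibly swapping the names of $r$ and $q$, the additional $r$-edge sits at $x$ as $xq$ labeled $r$ and the additional $q$-edge sits at $y$ as $yr$ labeled $q$, exactly as claimed.

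Finally I would show no further edge is incident to $x$ or $y$. For a hypothetical extra edge $xc$ labeled $s$ at $x$, the same two-parallel-edge good-path argument from step one forces $s\in\{r,q\}$; if $s=r$ then $c=q$ and the edge duplicates the existing $xq$ labeled $r$; if $s=q$ then $c=r$, producing a hyperedge $xr\to q$ whose vertex set $\{x,q,r\}$ coincides with that of $xq\to r$, which uniqueness forbids. The mirror analysis at $y$ yields the forbidden collision $\{y,q,r\}$. The main obstacle here is not any single step but keeping the bookkeeping straight; the key structural point, easy to underestimate, is that Observation~\ref{obs:2path} alone does not suffice---the assumption that every $3$-subset carries at most one hyperedge is essential both for ruling out the ``both additional edges at the same vertex'' configuration and for killing the spurious extra edges.
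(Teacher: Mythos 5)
Your overall route is the same as the paper's: normalize so that each $3$-set carries at most one hyperedge, then grind through $2$-paths via Observation~\ref{obs:2path} and use the triple-uniqueness to kill collisions. Steps one and two are fine (in step one the key inequality $s\neq y$ really does hold, because there $s$ labels an edge with endvertex $y$). The gap is in your final step. For a hypothetical extra edge $xc$ labeled $s$, pairing it with the $r$-labeled parallel edge gives ``$s=y$ or $c=r$'', and with the $q$-labeled one gives ``$s=y$ or $c=q$''; so the two-parallel-edge argument only yields $s\in\{r,q,y\}$, not $s\in\{r,q\}$ as you claim. The case $s=y$ genuinely survives both of those tests, since $l(xc)=y$ makes the path $cxy$ automatically good, so your case analysis (``if $s=r$\dots, if $s=q$\dots'') silently drops a live possibility; symmetrically, an extra edge at $y$ labeled $x$ is not excluded.

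The fix is short but needs an ingredient you did not invoke at this point, namely the already-established additional edge $xq$ with label $r$ (this is exactly how the paper closes the argument, by splitting on the other endvertex $c$ rather than on the label $s$). If $c\notin\{q,r\}$, the $2$-path $cxq$ with labels $y$ and $r$ is bad, since $y\neq q$ and $r\neq c$. If $c=q$, the edge is $xq$ with label $y$, and the hyperedge $xq\rightarrow y$ shares its vertex set $\{x,q,y\}$ with $xy\rightarrow q$; if $c=r$, the hyperedge $xr\rightarrow y$ shares $\{x,r,y\}$ with $xy\rightarrow r$. Either way the uniqueness assumption is violated. With this case added (and its mirror at $y$), your proof is complete and coincides in substance with the paper's.
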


\begin{proof}
	By definition there must be at least two parallel edges on $\{x,y\}$. Let two of them have labels $r,q$. As the pair is not weak, there must be $1-1$ additional edge with label $r$ and label $q$ incident to $x,y$. If they would be incident to the same vertex, say $x$, then the corresponding hyperedges would be on the same triple of vertices, a contradiction. Otherwise, wlog. we can assume that $x$ (resp. $y$) has an additional incident edge with label $r$ (resp. $q$) whose other endvertex must be $q$ (resp. $r$) otherwise there would be a bad path. See Figure \ref{fig:overloaded}(b).
	
	We are left to prove that no other edge is incident to $x$ or $y$. Assume on the contrary that there is an additional edge. Wlog. we can assume this edge is incident to $x$ (and maybe also to $y$). It cannot have label $r$ as then it must end in $q$, but there is already such an edge. It also cannot have label $q$ as then its other endvertex must be $r$ and then there would be two hypergedges on the same triple of vertices. Finally, it cannot have label $s\ne p,q$. Indeed, first, if the other endvertex of the edge is $q$ or $r$ then $s=y$ otherwise there would be a bad path and then there are two hyperedges on the same triple of vertices, a contradiction. Second, if the other endvertex is not in $\{p,q\}$ then together with the edge $xq$ they form a bad path, a contradiction.
\end{proof}

For each weak overloaded pair we delete all edges between its vertices, except for the representative edge. For each strong overloaded pair we delete all edges between its vertices. We get a graph $G'$ without parallel edges. Now we are able to define with respect to this $G'$ the cores, $K$, $K_t$, $K_c$, $R$, $R_t$, $R_c$ and $D$.

Lemma \ref{lem:strong} implies the following:

\begin{corollary}
	The vertices $x,y$ of a strong overloaded pair are both in $R_t$.
\end{corollary}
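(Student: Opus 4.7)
The plan is to read off the structural information from Lemma~\ref{lem:strong}, verify that $x$ and $y$ each have degree at most one in $G'$ (and hence in $G'[R]$), and then invoke Claim~\ref{claim:grstructure} to conclude that the components of $x$ and $y$ in $G'[R]$ are trees rather than cycles.

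More concretely, first I would recall from Lemma~\ref{lem:strong} that the only edges of $G$ incident to $x$ are the two parallel $xy$-edges (with labels $r$ and $q$) together with one further edge $xq$ of label $r$, and symmetrically the only edges of $G$ incident to $y$ are the two parallel $xy$-edges and one further edge $yr$ of label $q$. Since $\{x,y\}$ is a \emph{strong} overloaded pair, the construction of $G'$ discards \emph{both} of the parallel edges between $x$ and $y$ (in contrast with the weak case, where one representative is retained). Hence in $G'$ the vertex $x$ has exactly one incident edge, namely $xq$, and the vertex $y$ has exactly the edge $yr$.

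From this I conclude that $x \in R$: because $x$ is incident to only one edge in $G'$, it is incident to at most one edge carrying any given label, so by the definition of the cores $x$ cannot belong to any $K(v)$. The same reasoning gives $y \in R$.

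Finally, to separate $R_t$ from $R_c$, I would observe that the degree of $x$ in $G'[R]$ is at most $1$ (it is $0$ if $q \notin R$ and $1$ if $q \in R$). By Claim~\ref{claim:grstructure}, every component of $G'[R]$ is either a cyclically directed cycle, in which every vertex has degree exactly $2$, or a tree. Since $x$ has degree at most $1$ in $G'[R]$, it cannot lie on such a cycle, so its component is a tree and hence $x \in R_t$; the argument for $y$ is identical. There is essentially no obstacle here: the only point worth flagging is to use the \emph{strong} nature of the pair, since it is precisely the deletion of \emph{both} parallel edges (and not just one) that pins $x$ and $y$ down to degree at most one in $G'$.
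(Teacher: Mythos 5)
Your proposal is correct and follows exactly the route the paper intends: the paper states the corollary as an immediate consequence of Lemma~\ref{lem:strong} without further argument, and your write-up simply spells out the details (degree at most one in $G'$, hence not in any core, hence not on a cycle component of $G'[R]$).
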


Before defining the coloring we need to deal with the case handled in Lemma \ref{lem:nq} when strong overloaded pairs exist:

\begin{lemma}\label{lem:nqstrong}
	Given an edge $qx$ with label $r$ such that $q,r\in K$, $x\in R$. Let $X$ be the component of $G'[R]$ containing $x$. If $x$ participates in an overloaded pair $\{x,y\}$ then both $x$ and $y$ form single-vertex components in $G'[R]$.
\end{lemma}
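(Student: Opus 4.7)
The hypothesis involves $K$ and $R$, which are defined with respect to $G'$, while overloaded pairs are defined in $G$; the plan is to bridge this by invoking the rigid structural description of strong overloaded pairs from Lemma~\ref{lem:strong}. I will focus on the strong case, since for a weak overloaded pair the representative edge survives in $G'$ and Lemma~\ref{lem:nq}, applied directly in $G'$, already yields all the structural information needed.

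So assume $\{x,y\}$ is strong. By Lemma~\ref{lem:strong} the local picture in $G$ is completely rigid: there are exactly two parallel edges between $x$ and $y$, carrying some labels $p_1, p_2$; in addition $x$ is incident to an edge $xp_2$ with label $p_1$, and $y$ is incident to an edge $yp_1$ with label $p_2$; no other edges of $G$ touch $x$ or $y$. Passing to $G'$, both parallel edges between $x,y$ are deleted (since $\{x,y\}$ is strong), so $x$ retains only the single incident edge $xp_2$ (label $p_1$) and $y$ retains only the single incident edge $yp_1$ (label $p_2$).

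The final step matches this with the hypothesis: the given edge $qx$ in $G'$ with label $r$ must coincide with the unique $G'$-edge at $x$, which forces $q=p_2$ and $r=p_1$, so in particular $p_1,p_2\in K$. Hence the unique $G'$-neighbor of $x$ is $p_2\in K$, so $x$ has no neighbor in $R$ and its component in $G'[R]$ is $\{x\}$. For $y$: it has a single incident edge in $G'$, hence cannot belong to any $K(s)$ (which would demand two edges at $y$ sharing a label), so $y\in R$; and its unique neighbor is $p_1\in K$, giving the component $\{y\}$ in $G'[R]$.

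There is no real obstacle once Lemma~\ref{lem:strong} is invoked; the proof is a direct identification of the hypothesized edge $qx$ with the unique residual edge at $x$ in $G'$, plus a one-line verification that $y$ cannot slip into $K$ because it has too few incident edges in $G'$ to carry two copies of the same label.
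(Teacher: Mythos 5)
Your handling of the strong case is correct and is essentially the paper's own proof: invoke Lemma~\ref{lem:strong} to pin down the complete neighbourhood of $x$ and $y$ in $G$, note that after deleting the two parallel edges each of $x$ and $y$ retains exactly one incident edge in $G'$, identify the surviving edge at $x$ with the hypothesized edge $qx$ (forcing the labels of the parallel edges to be $r$ and $q$), and conclude that the unique $G'$-neighbour of each of $x$ and $y$ lies in $K$, so both are isolated in $G'[R]$; the observation that $y$ has too few $G'$-edges to lie in any core, hence $y\in R$, appears in both arguments.

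The one step that does not hold up is your dismissal of the weak case. Lemma~\ref{lem:nq} applied in $G'$ only tells you that the component $X$ is a directed path with sink $x$ (or a single central edge); a directed path may have more than one vertex, so this does \emph{not} deliver the stated conclusion that $x$ is a single-vertex component. In fact the conclusion can genuinely fail for a weak pair: take parallel edges on $\{x,y\}$ with labels $r$ and $q$ together with the edge $qx$ labelled $r$ (plus a second edge at $q$ labelled $r$ and two edges at $r$ with a common label $s$, to put $q$ and $r$ into $K$). All $2$-paths are good, the pair $\{x,y\}$ is weak with representative edge labelled $q$, and in $G'[R]$ the component of $x$ is the edge $xy$, not a single vertex. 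So the lemma is really a statement about \emph{strong} overloaded pairs only; the paper's proof makes the same silent restriction by opening with Lemma~\ref{lem:strong}, and every application of the lemma is indeed to a strong pair. Your proof of the case that matters is fine; just do not claim that the weak case follows from Lemma~\ref{lem:nq}.
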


\begin{proof}
	By Lemma \ref{lem:strong} the overloaded pair must have parallel edges with labels $r,q$ and the only edge incident to $x$ in $G'[R]$ is the edge $qx$. As $x\in K$, $x$ has no incident edges in $G'[R]$, therefore it is a single-vertex component of $G[R_t]$, as claimed. Similarly, $y$ has only one incident edge in $G'$, $yr$, with label $q$. As $r\in K$, $y$ has no incident edges in $G'[R]$, as required.
\end{proof}

\bigskip
\noindent {\bf The coloring process when there can be parallel edges} 

\begin{itemize}

\item[] {\bf Phase 1'. Coloring the vertices of $K_c$.} We color according to Phase 1 applied to $G'$.

\item[] {\bf Phase 2'. Coloring the vertices of $R_c$.} We color according to Phase 2 applied to $G'$. When doing this, for odd cycles we can arbitrarily choose the unique edge of the cycle which will be monochromatic. By Lemma \ref{lem:no2overloaded} we can choose this edge such that it is not on an overloaded pair of vertices.

\item[] {\bf Phase 3'. Coloring the vertices of $R_t$.} We color according to Phase 3 applied to $G'$ and the partial coloring we have so far, with the following exceptions. 

\begin{itemize}
	\item[(i)] 
	If in a strong pair $\{x,y\}$ the vertex $y$ is a single-vertex component of $G'[R]$ but the component of $x$ is not a single-vertex component, then by Lemma \ref{lem:nqstrong} $y$ is not a minion vertex and thus it is uncolored before Phase 3'. Therefore, when coloring the single-vertex component $\{y\}$, we can arbitrarily choose how to color $y$. We color $y$ to a color different from the color of $x$.
	\item[(ii)] 
	If in a strong pair $\{x,y\}$ both $x$ and $y$ are in a single-vertex 	 component of $G'[R]$, then at most one of them, wlog. $x$ can be a minion vertex. To see this, assume that one of them, wlog. $x$, is a minion vertex (and thus colored red in Phase 2'), then we claim that $y$ cannot be a minion vertex. Indeed, if $y$ would be also a minion vertex then by Lemma \ref{lem:nqstrong} $x$ would be a minion of the rebel vertex $q\in K(r)$ while $y$ would be a minion of $r\in K(q)$, that is, $K(r)$ and $K(q)$ would form a cycle of length two in $D$, contradicting that in $K_c$ only odd cycles contain rebel vertices. Thus $y$ is not a minion vertex and thus it is uncolored before Phase 3'. Therefore, when coloring the single-vertex component $\{y\}$, we can arbitrarily choose how to color $y$. We color $y$ to a color different from the color of $x$.
\end{itemize}

\item[] {\bf Phase 4'. Coloring the vertices of $K_t$.} We color according to Phase 4 applied to $G'$ and the partial coloring we have so far.

\item[] This finishes the coloring process.
\end{itemize}

\begin{claim} 
	The above defined coloring is a proper $2$-coloring of $\HH$.
\end{claim}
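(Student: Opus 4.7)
The plan is to verify the coloring in two stages: first, the hyperedges whose associated edge in $G$ still lies in $G'$, and second, those whose edge was deleted when forming $G'$.

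For the first stage, I would observe that the coloring produced by Phases 1'--4' coincides with a legitimate output of Phases 1--4 applied to $G'$. The only substantive deviations occur in Phase 3' cases (i) and (ii), which specify colors only for single-vertex components of $G'[R]$; but Phase 3 permits any choice for such components. Hence the coloring is a valid Phase 1--4 coloring on $G'$, and Claim \ref{claim:nopara2col} applied to $G'$ guarantees that every hyperedge whose $G$-edge survives in $G'$ is non-monochromatic.

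For the second stage, I would begin with the easier strong-pair case. Given a strong overloaded pair $\{x,y\}$, Lemma \ref{lem:strong} supplies extra edges $xq$ with label $r$ and $yr$ with label $q$, and Lemma \ref{lem:nqstrong} applied to each (after checking that $q,r\in K$ with respect to $G'$'s partition) forces both $x$ and $y$ to form single-vertex components of $G'[R]$. Phase 3' case (ii) thus fires and colors $y$ opposite to $x$; the two deleted parallel edges $xy$, whose tails are the bichromatic pair $\{x,y\}$, induce non-monochromatic hyperedges irrespective of which vertex is the head.

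The principal obstacle is the weak-pair case. For a weak pair $\{x,y\}$ with representative labeled $t$ (unique at $x$ and $y$) and non-representative parallel edges labeled $s_1,\dots,s_m$, the representative hyperedge is covered by the first stage, but each $\{x,y,s_i\}$ still needs to be non-monochromatic. If $x$ and $y$ happen to receive different colors, this is automatic, so the crux is the case where they share a color (forcing $t$ to receive the opposite color by the first stage). I would exploit the uniqueness of $t$ at $x,y$ combined with Observation \ref{obs:2path} applied to 2-paths $(z,x,y)$ and $(x,y,w)$ in $G$ that use a non-representative parallel edge; this should pin down $s_i$ either as a specific neighbour of $x$ or $y$ in $G'$ attached by an edge with a forced label, or as an element of a core attached to $x$ or $y$. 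An analogue of Lemma \ref{lem:nq} adapted to weak pairs would then locate $s_i$ within $K_c$, $K_t$, $R_c$, or $R_t$, and a case analysis over its location --- mirroring the breakdown in the proof of Claim \ref{claim:nopara2col} --- should show that the coloring rules of Phases 1'--4' force $s_i$ to the opposite color of $x$ and $y$. The delicate point is that several non-representative labels $s_1,\dots,s_m$ may coexist, each imposing its own structural constraint; hence obtaining a uniform description of the local picture around a weak pair is the hardest ingredient of the proof.
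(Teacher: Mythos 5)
Your overall decomposition (hyperedges whose $G$-edge survives in $G'$, then deleted edges on strong pairs, then deleted edges on weak pairs) matches the paper's, and your first stage is correct: the Phase 3' deviations only touch single-vertex components of $G'[R]$, so Claim \ref{claim:nopara2col} applies to everything in $G'$. However, your strong-pair case has a genuine gap. You invoke Lemma \ref{lem:nqstrong} on the edges $xq$ (label $r$) and $yr$ (label $q$) ``after checking that $q,r\in K$'' --- but that check can fail. Lemma \ref{lem:strong} gives $q$ one incident edge with label $r$ and $r$ one incident edge with label $q$; nothing forces a second such edge, so $q$ and $r$ may well lie in $R$, in which case $xq$ and $yr$ are edges of $G'[R]$, the components of $x$ and $y$ are \emph{not} single vertices, and neither exceptional rule of Phase 3' fires. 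Your argument then produces no reason why $x$ and $y$ (or $x$ and $q$) should get different colors. The paper handles exactly this configuration separately: it shows the edge $xq$ must be non-monochromatic (if it were monochromatic it would have to be a central edge joining two minion vertices, and then Lemma \ref{lem:nqstrong} would force $x$ into a single-vertex component, a contradiction), and then the hyperedge $xy\rightarrow q$ is saved because its head $q$ is colored differently from its tail vertex $x$; symmetrically for $yr$ and the hyperedge $xy\rightarrow r$.

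The weak-pair case, which you correctly identify as the crux, is left as a sketch and does not close. The paper's route is more economical than the one you outline: rather than classifying the location of each non-representative label $s_i$, it observes that a deleted edge $e$ is automatically fine whenever the representative $e'$ is non-monochromatic, and then enumerates the only configurations in which $e'$ can be monochromatic. If $e'$ touches a core $K(r)$ its label is automatically $\ne r$ (a representative label is unique at its endpoints), so it falls under the already-proved non-monochromatic case of Claim \ref{claim:nopara2col}; if $e'$ is the singular monochromatic edge of an odd cycle in $R_c$, this is excluded because Phase 2' deliberately places that edge off any overloaded pair (a point your sketch omits entirely, and which is the reason Phase 2' differs from Phase 2); the only surviving configuration is a single-edge minion component, where a bad-path argument forces the label of every deleted parallel edge to be the blue core vertex $r$, killing monochromaticity. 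Your concern about ``several non-representative labels coexisting'' dissolves in this framework, since in the one residual case all such labels are forced to equal $r$ and distinctness leaves at most one. As written, your proposal would need both the missing strong-pair subcase and a completed weak-pair analysis before it constitutes a proof.
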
	
\begin{proof}	
	Hyperedges corresponding to edges of $G'$ are non-monochromatic by Claim \ref{claim:nopara2col}. 
	
	Next we consider edges of $G\setminus G'$ that are on strong overloaded pairs. If an edge is on a strong overloaded pair $\{x,y\}$ then there are two cases (recall that $x,y\in R_t$). 
	
	First, assume that both of $x,y$ are in non-single-vertex components in $G'[R]$ (these two components may coincide). One of these components contains the edge $xq$, the other the edge $yr$, where $r,q$ are labels of the parallel edges on $x,y$. Then the edge $xq$ is non-monochromatic. Indeed, suppose on the contrary, then it must be a central edge with $x$ and $q$ both being minion vertices of some rebel vertex. Yet if $x$ is a minion vertex then by Lemma \ref{lem:nqstrong} it forms a single-vertex component, a contradiction. Thus $xq$ is non-monochromatic which implies that the hyperedge corresponding to the edge on $x,y$ with label $q$ is also non-monochromatic. Similarly, one can show that the edge $yr$ is non-monochromatic which implies that the hyperedge corresponding to the edge on $x,y$ with label $r$ is also non-monochromatic. 
	
	Second, if at least one of the components of $x$ and $y$ is a single vertex then in Phase 3' we guaranteed that they get different colors, and so the hyperedges corresponding to the parallel edges on $x,y$ are also non-monochromatic.

	We are left to consider edges of $G\setminus G'$ that are on weak overloaded pairs.
	
	For such an edge $e$ let $e'\in G'$ be the representative edge on the same pair of vertices. If $e'$ is properly colored during the coloring then $e$ is also properly colored and in turn the corresponding hyperedge as well. So we need to consider only those cases when $e$ might become monochromatic when coloring $G'$. 
	
	Notice that if one endpoint of $e'$ is in some $K_r$ then the label $a$ of $e'$ is different from $r$ by definition of a representative edge. Recall from the (last paragraph of the) proof of Claim \ref{claim:nopara2col} that in this case, i.e., when $e'$ is incident to some $K_r$ with label $a\ne r$, $e'$ (and in turn $e$) is non-monochromatic and we are done.
	
	Thus, we are left to deal with the case when $e'$ is in $R$. First, if $e'$ is in a cycle-component of $R_c$ then in Phase 2' we made sure that its endpoints get different colors and we are done. Second, if $e'$ is in a tree-component of $R_t$ then it is non-monochromatic unless $e'$ is a single-edge component in which the endpoints $x,y$ of $e'$ are minion vertices of some rebel vertex $q\in K(r)$. Then the label of $e$ must be $r$ otherwise we get a bad path on the vertices $q,x,y$. Notice that in Phase 1' $r$ (along with every vertex in the core in which $r$ is) gets color blue (see Figure \ref{fig:coloring}) and thus the hyperedge corresponding to $e$ is not monochromatic (its tail is red and its head is blue) and we are done.	
\end{proof}

We remark that the coloring process in the proof of Theorem \ref{thm:3unif} gives an efficient algorithm to properly $2$-color the vertices of a $2\rightarrow 1$ hypergraph having Property S. It can be easily implemented to run in $O(n^2)$ time, where $n$ is the number of vertices (recall that $\HH$ has $O(n^2)$ hyperedges and can have $\Theta(n^2)$ hyperedges), the details are left to the interested reader.

\bigskip
\noindent{\bf Acknowledgement}

The proof of Theorem \ref{thm:polyassim} is a joint work with D. P\'alv\"olgyi. The proof of Theorem \ref{thm:linear} is a joint work with P. \'Agoston, F. Bencs, Z. Bl\'azsik, G. Dam\'asdi, M. Nasz\'odi and B. Patk\'os done during the Tenth Eml\'ekt\'abla Workshop. The proof of Theorem \ref{thm:21ex} is a joint work with D. Gerbner. These results were not published before, they are part of this publication with their agreement, for which the author is grateful. Additionally, the author is thankful for these people and to A. Gyárfás, N. Salia and C. Xiao for the interesting discussions about this topic.

\bibliographystyle{plainurl}
\bibliography{dirhypergraphs}

\end{document}